\newtheorem{thm}{Theorem}[section]
\newtheorem{cor}[thm]{Corollary}
\newtheorem{lem}[thm]{Lemma}
\newtheorem{prop}[thm]{Proposition}
\theoremstyle{definition}
\newtheorem{defn}[thm]{Definition}
\newtheorem{rem}[thm]{Remark}
\newtheorem{fact}[thm]{Fact}
\newtheorem{prob}[thm]{Problem}
\newtheorem{example}[thm]{Example}
\numberwithin{equation}{section}
\newcommand{\norm}[1]{\left\Vert#1\right\Vert}
\newcommand{\abs}[1]{\left\vert#1\right\vert}
\newcommand{\set}[1]{\left\{#1\right\}}
\newcommand{\To}{\longrightarrow}
\newcommand{\nat}{\mathbb{N}}
\newcommand{\sub}{\subseteq}
\begin{document}

\setcounter{tocdepth}{1}
%\date{\today}

%\renewcommand{\baselinestretch}{1.1}

\title{A Godefroy-Kalton principle for free Banach lattices}

\author[A.\ Avil\'es]{Antonio Avil\'es}
\address{Universidad de Murcia, Departamento de Matem\'{a}ticas, Facultad de Matem\'{a}ticas, 30100 Espinardo (Murcia), Spain.}
\email{avileslo@um.es}

\author[G. Mart\'inez-Cervantes]{Gonzalo Mart\'inez-Cervantes}
\address{Universidad de Murcia, Departamento de Matem\'{a}ticas, Facultad de Matem\'{a}ticas, 30100 Espinardo (Murcia), Spain.}
\email{gonzalo.martinez2@um.es}

\author[J. Rodr\'{i}guez]{Jos\'e Rodr\'{i}guez}
\address{Departamento de Ingenier\'{i}a y Tecnolog\'{i}a de Computadores,
Facultad de Inform\'{a}tica, Universidad de Murcia, 30100 Espinardo (Murcia), Spain.}  
\email{joserr@um.es}

\author[P. Tradacete]{Pedro Tradacete}
\address{Instituto de Ciencias Matem\'aticas (CSIC-UAM-UC3M-UCM)\\
Consejo Superior de Investigaciones Cient\'ificas\\
C/ Nicol\'as Cabrera, 13--15, Campus de Cantoblanco UAM\\
28049 Madrid, Spain.}
\email{pedro.tradacete@icmat.es}

\keywords{Banach lattice; free Banach lattice; lifting property; unconditional basis; retract}
\subjclass[2020]{46B42}

\begin{abstract}
Motivated by the Lipschitz-lifting property of Banach spaces introduced by Godefroy and Kalton, we consider the lattice-lifting property, which is an analogous notion within the category of Banach lattices and lattice homomorphisms. Namely, a Banach lattice $X$ satisfies the lattice-lifting property if every lattice homomorphism to~$X$ having a bounded linear right-inverse must have a lattice homomorphism right-inverse. In terms of free Banach lattices, this can be rephrased into the following question: 
which Banach lattices embed into the free Banach lattice which they generate as a lattice-complemented sublattice? We will provide necessary conditions for a Banach lattice to have the lattice-lifting property, and show that this property is shared by Banach spaces with a $1$-unconditional basis as well as free Banach lattices. The case of $C(K)$ spaces will also be analyzed.
\end{abstract}

\maketitle

\section{Introduction}

In a fundamental paper concerning the Lipschitz structure of Banach spaces, G.~Godefroy and N.J.~Kalton introduced the Lipschitz-lifting property of a Banach space. 
In order to properly introduce this notion, and as a motivation for our work, let us recall the basic ingredients for this construction (see \cite{GK} for details). 
Given a Banach space~$E$, let $Lip_0(E)$ denote the Banach space of all real-valued Lipschitz functions on~$E$ which vanish at $0$, equipped with the norm 
$$
	\|f\|_{Lip_0(E)}=\sup\left\{\frac{|f(x)-f(y)|}{\|x-y\|_E}: \, x,y\in E, \, x\neq y\right\}.
$$ 
The Lipschitz-free space over~$E$, denoted by~$\mathcal F(E)$, is the canonical predual of $Lip_0(E)$, that is the closed linear span of the evaluation 
functionals $\delta(x)\in Lip_0(E)^*$ given by $\langle\delta(x),f\rangle=f(x)$ for all $x\in E$ and all $f\in Lip_0(E)$. In this setting, the map 
$$
\delta:E\rightarrow \mathcal F(E)
$$ 
is easily seen to be a non-linear isometry and there is a bounded linear operator, called the barycenter map, 
$$
	\beta:\mathcal F(E)\rightarrow E
$$ 
satisfying $\beta \circ \delta=id_E$. A Banach space $E$ is said to have the Lipschitz-lifting property if there is a bounded \emph{linear} operator $T:E\rightarrow \mathcal F(E)$ 
such that $\beta \circ T=id_E$. In particular, Banach spaces with the Lipschitz-lifting property embed linearly into their corresponding Lipschitz-free spaces. 
It was proved in \cite[Theorem~3.1]{GK} that every separable Banach space has the Lipschitz-lifting property, and as a consequence if a separable Banach space 
$E$ embeds isometrically into a Banach space~$F$ (via a not necessarily linear map), then $F$ contains a linear subspace which is linearly isometric to~$E$ (see \cite[Corollary~3.3]{GK}). This is a relevant step in the non-linear theory of Banach spaces and in what is called the Ribe program, whose aim is to get a better understanding of the relation between the metric and linear structure of Banach spaces and their subsets (see \cite{BL,K,Naor12} for further background and surveys of recent 
developments).

Our aim here is to study the analogous situation when considering the category of Banach lattices and lattice homomorphisms. 
Let us recall that free and projective Banach lattices were introduced by B.~de~Pagter and A.W.~Wickstead in~\cite{dPW15}. 
The free Banach lattice generated by a set $A$, denoted by $FBL(A)$, was 
further extended in~\cite{ART18} to define the concept of free Banach lattice generated by a Banach space~$E$, denoted by~$FBL[E]$. 
This is a Banach lattice together with a linear isometry $\delta_E: E\to FBL[E]$ such that for every Banach lattice $X$ and every bounded linear operator 
$T : E \to X$ there is a unique lattice homomorphism $\hat T : FBL[E] \to X$ such that $\hat T \circ \delta_E = T$ and moreover $\|\hat T\| = \norm{T}$. 
Existence and uniqueness up to Banach lattice isometries of free Banach lattices were proved in~\cite{ART18} (also in~\cite{dPW15} for the case of~$FBL(A)$), 
and an explicit description was provided in \cite[Theorem 2.5]{ART18}, as follows.
Let $E$ be a Banach space. Given a positively homogeneous function $f:E^\ast \to \mathbb{R}$ (i.e., $f(\lambda x ^*)=\lambda f(x^*)$ for every $\lambda >0$ and 
every $x^* \in E^*$), we consider 
$$ 
	\norm{f}_{FBL[E]} := \sup \set{\sum_{i = 1}^n \abs{f(x_{i}^{*})} : n \in \mathbb{N}, \, 
	x_1^{*}, \ldots, x_n^{*} \in E^{*},\ \sup_{x \in B_E} \sum_{i=1}^n \abs{x_i^{*}(x)} \leq 1 } \in [0,\infty].
$$
We denote by $H_0[E]$ the linear space of all positively homogeneous functions $f:E^*\to \mathbb R$ such that $\|f\|_{FBL[E]}<\infty$,
which becomes a Banach lattice when equipped with the norm $\|\cdot\|_{FBL[E]}$ and the pointwise order.
Then the Banach lattice $FBL[E]$ is the sublattice of $H_0[E]$ generated by the set $\{\delta_E(x):x\in E\}$,
where $\delta_E(x)$ denotes the evaluation map defined by
$$
	\delta_E(x): E^*\to \mathbb R,
	\quad \delta_E(x)(x^\ast):= x^\ast(x)
	\text{ for all }x^*\in E^*.
$$ 
These evaluations form the natural copy of~$E$ inside $FBL[E]$, via the linear isometry $\delta_E: E\to FBL[E]$.

It is worth noticing that in the case of the underlying Banach space $E$ being a Banach lattice, its lattice structure seems to be ``forgotten'' when it embeds into $FBL[E]$. However, some traces of this structure can still be recovered in some situations. To clarify this, let $X$ be a Banach lattice, and consider the identity map $id_X:X\rightarrow X$ that can 
be ``extended'' to a lattice homomorphism from $FBL[X]$ to~$X$ which, in analogy with the Lipschitz-free space situation, 
will be denoted by $\beta_X$. That is, $\beta_X:FBL[X]\to X$ is the unique lattice homomorphism such that $\beta_X \circ \delta_X=id_X$, and we have $\|\beta_X \|=1$.
Our purpose in this paper is to explore under which conditions there exists a lattice homomorphism $T: X\to FBL[X]$ such that $\beta_X \circ T=id_X$. Note that,
if this is the case, then $X$ is lattice isomorphic (via~$T$) to a sublattice of $FBL[X]$ which is lattice-complemented (via~$\beta_X$).

\begin{defn}
We say that a Banach lattice $X$ has the {\em lattice-lifting} property if 
there exists a lattice homomorphism $T: X\to FBL[X]$ such that $\beta_X \circ T=id_X$. If in addition $T$ can be chosen with $\|T\|=1$, then
we say that $X$ has the {\em isometric lattice-lifting} property.
\end{defn}

The paper is organized as follows. In Section \ref{s:llp}, we present some basic properties and examples of Banach lattices with the lattice-lifting property:
these include projective Banach lattices and the space $FBL[E]$ for any Banach space~$E$. 
As a consequence of the latter, a Banach lattice $X$ has the lattice-lifting property if and only if it is lattice isomorphic to a lattice-complemented 
sublattice of~$FBL[X]$ (Corollary~\ref{cor:char}).
In the first part of Section~\ref{section:CK}
we discuss the lattice-lifting property for the Banach lattice $C(K)$, where $K$ is a compact Hausdorff topological space. 
It turns out that the lattice-lifting property of $C(K)$ implies that $K$ is a neighborhood retract of the closed dual unit ball $B_{C(K)^*}$
(equipped with the $w^*$-topology), see Theorem~\ref{thm:CK}. This, combined with a result of~\cite{AMR20_jmaa}, ensures
that, for metrizable~$K$, the lattice-lifting property of~$C(K)$ is equivalent to being a projective Banach lattice (Corollary~\ref{cor:CK}).
In the second part of Section~\ref{section:CK} we analyze some topological properties of the set 
of all real-valued lattice homomorphisms on a Banach lattice having the isometric lattice-lifting property. 
Finally, in Section \ref{section:unconditional} we show that every Banach space with a $1$-unconditional basis (as a Banach lattice with 
the coordinatewise order) satisfies the isometric lattice-lifting property (Theorem~\ref{t:unconditional}). This is the most technical result
of the paper and its proof is based on some ideas of~\cite{AMR20}, where the particular case of~$c_0$ was addressed.

\subsection*{Terminology}

All our Banach spaces are real. Given a Banach space~$E$, its norm is denoted by $\|\cdot\|_E$ or simply~$\|\cdot\|$. We write  
$B_E:=\{x\in E:\|x\|\leq 1\}$ for the closed unit ball of~$E$ and the symbol~$E^*$ stands for the topological dual of~$E$. The weak$^*$-topology
on~$E$ is denoted by~$w^*$. 

By a {\em lattice homomorphism} between Banach lattices we mean a bounded linear operator preserving lattice operations.
Given a Banach lattice $X$, we write $X_+:=\{x\in X: x\geq0\}$. By a ``sublattice'' of~$X$ we mean a closed vector sublattice. 
We say that a sublattice $Y \sub X$ is {\em lattice-complemented} if there is a lattice homomorphism $P:X\to Y$ such that $P|_Y=id_Y$. 
We denote by $Hom(X,\mathbb R) \sub X^*$ the set of all lattice homomorphisms from~$X$ to~$\mathbb R$. In the case
of free Banach lattices over Banach spaces this set admits a concrete description. Namely, given a Banach space~$E$, the set $Hom(FBL[E],\mathbb R)$ 
is precisely $\{\varphi_{x^*}:x^*\in E^*\}$, where each functional $\varphi_{x^*}:FBL[E]\to \mathbb R$ is defined by
$$
	\varphi_{x^*}(f):=f(x^*)
	\quad\text{for every $f\in FBL[E]$,} 
$$
see \cite[Corollary~2.7]{ART18}.

By a ``compact space'' we mean a compact Hausdorff topological space. 
Given a compact space~$K$, we denote by $C(K)$ 
the Banach lattice of all real-valued continuous functions on~$K$, equipped with the supremum norm and the pointwise order.

Recall that a Schauder basis $(e_n)_{n\in \nat}$ of a Banach space~$E$ is called a {\em $1$-unconditional basis} if
for every $n\in \nat$, and $a_1,\dots,a_n,b_1,\dots,b_n\in \mathbb R$ satisfying $|a_i|\leq |b_i|$ for $i=1,\dots,n$, then
$$
	\left\|\sum_{i=1}^n a_i e_i\right\|\leq \left\|\sum_{i=1}^n b_i e_i\right\|.
$$
In this case, $E$ becomes a Banach lattice when equipped with the coordinatewise order given by
$$
	x\leq y \quad \Longleftrightarrow \quad e_n^*(x) \leq e_n^*(y) \text{ for all $n\in \nat$},
$$
where $(e_n^*)_{n\in \mathbb N}$ is the sequence in~$E^*$ of biorthogonal functionals associated with~$(e_n)_{n\in \mathbb N}$.

\section{Basics on the lattice-lifting property}\label{s:llp}

The following proposition shows some statements which are easily seen to be equivalent to the lattice-lifting property:

\begin{prop}\label{prop:EquivalencesLLP}
Let $X$ be a Banach lattice and $\lambda\geq 1$. The following statements are equivalent:
\begin{enumerate}
\item[(i)] There is a lattice homomorphism $\alpha: X \to FBL[X]$ with $\|\alpha\|\leq \lambda$ such that $\beta_X \circ \alpha=id_X$.
\item[(ii)] For every commutative diagram
$$
 			\xymatrix@R=3pc@C=3pc{Z_1 \ar[r]^{\pi} & Z_2\\
			 & X \ar@{->}[ul]_{T} \ar[u]_{\phi} \\ }
$$ 
where $Z_1$ and $Z_2$ are Banach lattices, $\pi$ and $\phi$ are lattice homomorphisms and $T$ is a bounded linear operator, 
there is also a lattice homomorphism $T':X\to Z_1$ with $\pi\circ T' = \phi$ and $\|T'\|\leq \lambda \|T\|$.
\item[(iii)] If $Y$ is a Banach lattice, $\pi:Y\to X$ is a lattice homomorphism and there is a bounded linear operator $T: X\to Y$ such that $\pi\circ T=id_X$, 
then there is also a lattice homomorphism $T':X\to Y$ such that $\pi\circ T'=id_X$ and $\|T'\|\ \leq \lambda \|T\|$.
\end{enumerate}
\end{prop}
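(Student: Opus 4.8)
The plan is to prove the three statements equivalent via the cycle $(i)\Rightarrow(ii)\Rightarrow(iii)\Rightarrow(i)$, exploiting the universal (freeness) property of $FBL[X]$ as the main engine. The whole proposition is really a statement that the lattice-lifting property of $X$, defined intrinsically in terms of the canonical maps $\delta_X$ and $\beta_X$, is in fact a \emph{lifting} property against arbitrary surjective-type diagrams; the content is that the single universal test case provided by $FBL[X]$ already detects the general one.

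First I would establish $(ii)\Rightarrow(iii)$, which is purely formal: given the data in $(iii)$, namely a Banach lattice $Y$, a lattice homomorphism $\pi\colon Y\to X$ and a bounded linear operator $T\colon X\to Y$ with $\pi\circ T=id_X$, I simply instantiate the diagram in $(ii)$ with $Z_1=Y$, $Z_2=X$, the given $\pi$, and $\phi=id_X$ (a lattice homomorphism). The operator $T$ of $(iii)$ plays the role of the bounded linear $T$ in $(ii)$, and $(ii)$ directly yields a lattice homomorphism $T'\colon X\to Y$ with $\pi\circ T'=id_X$ and $\|T'\|\le\lambda\|T\|=\lambda\|T\|$. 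So this implication costs nothing beyond matching up the notation.

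Next, $(iii)\Rightarrow(i)$ is the concrete specialization: apply $(iii)$ to the particular Banach lattice $Y=FBL[X]$ with $\pi=\beta_X$. Here the hypothesis of $(iii)$ is satisfied by taking $T=\delta_X\colon X\to FBL[X]$, which is a bounded linear operator (indeed a linear isometry, so $\|\delta_X\|=1$) satisfying $\beta_X\circ\delta_X=id_X$ by the defining property of $\beta_X$ recalled in the introduction. Then $(iii)$ hands us a lattice homomorphism $\alpha=T'\colon X\to FBL[X]$ with $\beta_X\circ\alpha=id_X$ and $\|\alpha\|\le\lambda\|\delta_X\|=\lambda$, which is exactly statement $(i)$.

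The one implication with genuine content is $(i)\Rightarrow(ii)$, and this is where I expect the main obstacle to lie. Given $\alpha\colon X\to FBL[X]$ from $(i)$ and an arbitrary diagram as in $(ii)$, the idea is to use freeness to transport $\alpha$ across the diagram. The composition $\phi\circ\beta_X\colon FBL[X]\to Z_2$ need not factor through $\pi$ on the nose, so instead I would work with the bounded linear operator $T\colon X\to Z_1$ and extend it: by the universal property of $FBL[X]$, the bounded linear operator $T\colon X\to Z_1$ induces a unique lattice homomorphism $\hat T\colon FBL[X]\to Z_1$ with $\hat T\circ\delta_X=T$ and $\|\hat T\|=\|T\|$. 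I then set $T'=\hat T\circ\alpha\colon X\to Z_1$, a composition of lattice homomorphisms, hence a lattice homomorphism, with $\|T'\|\le\|\hat T\|\,\|\alpha\|\le\|T\|\cdot\lambda$. The crux is verifying $\pi\circ T'=\phi$: one computes $\pi\circ T'=\pi\circ\hat T\circ\alpha$, and the delicate point is to show $\pi\circ\hat T=\phi\circ\beta_X$ as lattice homomorphisms out of $FBL[X]$. Both are lattice homomorphisms from the free lattice $FBL[X]$, so by the uniqueness clause of the universal property it suffices to check they agree after precomposing with $\delta_X$; there one has $\pi\circ\hat T\circ\delta_X=\pi\circ T=\phi$ (from commutativity of the diagram) and $\phi\circ\beta_X\circ\delta_X=\phi\circ id_X=\phi$, so the two agree on $\delta_X(X)$ and therefore coincide. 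Combining, $\pi\circ T'=\phi\circ\beta_X\circ\alpha=\phi\circ id_X=\phi$, completing the implication. The subtlety to be careful about is precisely this invocation of uniqueness in the universal property to promote agreement on the generating copy of $X$ to agreement on all of $FBL[X]$.
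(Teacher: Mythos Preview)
Your proof is correct and follows essentially the same argument as the paper: the same cycle $(i)\Rightarrow(ii)\Rightarrow(iii)\Rightarrow(i)$, with the substantive step $(i)\Rightarrow(ii)$ handled by extending $T$ to $\hat T$ via the universal property, setting $T'=\hat T\circ\alpha$, and verifying $\pi\circ\hat T=\phi\circ\beta_X$ by checking agreement on $\delta_X(X)$. The other two implications are the same specializations the paper uses.
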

\begin{proof}
(i)$\Rightarrow$(ii) Let $\hat{T}:FBL[X]\to Z_1$ be the unique lattice homomorphism satisfying $\hat{T}\circ \delta_X=T$. 
Since $\pi \circ \hat{T}$ and $\phi \circ \beta_X$ are lattice homomorphisms and $\pi \circ \hat{T}\circ\delta_X=\phi=\phi \circ \beta_X \circ \delta_X$,
we have $\pi \circ \hat{T}=\phi \circ \beta_X$ and so $T':=\hat{T}\circ \alpha$ satisfies the required properties.

(ii)$\Rightarrow$(iii) Observe that (iii) is the particular case of~(ii) when $Z_1=Y$, $Z_2=X$ and $\phi=id_X$. 
	
(iii)$\Rightarrow$(i) Observe that (i) is the particular case of~(iii) when $Y=FBL[X]$, $\pi=\beta_X$ and $T=\delta_X$.	
\end{proof}

We say that a Banach lattice $X$ has the lattice-lifting property \textit{with constant $\lambda$} if $X$ and $\lambda$ satisfy any of the equivalent 
conditions of Proposition~\ref{prop:EquivalencesLLP}.

Projective Banach lattices provide a source of examples satisfying the lattice-lifting property, see Proposition~\ref{prop:projective} below. 
Given $\lambda\geq 1$, a Banach lattice $X$ is called {\em $\lambda$-projective} if whenever $Y$ is a Banach lattice, $J$ is a closed ideal of~$Y$ 
and $Q: Y \rightarrow Y/J$ is the quotient map, then for every lattice homomorphism $T : X\rightarrow Y/J$ there is a lattice homomorphism $\hat T : X \rightarrow Y$ such that $T = Q \circ\hat T$ and $\|\hat T\|\leq \lambda \|T\|$.
A Banach lattice is called {\em projective} if it is $\lambda$-projective for some $\lambda\geq 1$. 
This class of Banach lattices has been studied in \cite{AMR20_jmaa,AMR20,dPW15}. 
Examples of projective Banach lattices include all finite dimensional Banach lattices, 
all free Banach lattices of the form $FBL(A)$ and the spaces $\ell_1$ and $C[0,1]$, see \cite[\S10 and \S11]{dPW15}.

\begin{prop}\label{prop:projective}
Every $\lambda$-projective Banach lattice has the lattice-lifting property with constant $\lambda$.
\end{prop}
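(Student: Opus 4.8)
The plan is to verify condition (i) of Proposition~\ref{prop:EquivalencesLLP}, i.e. to produce a lattice homomorphism $\alpha\colon X\to FBL[X]$ with $\|\alpha\|\leq\lambda$ and $\beta_X\circ\alpha=id_X$. The key observation is that $\beta_X$ already presents $X$ as a lattice quotient of $FBL[X]$: since $\beta_X\circ\delta_X=id_X$, the lattice homomorphism $\beta_X$ is surjective, and being a lattice homomorphism its kernel $J:=\ker\beta_X$ is a closed ideal of $FBL[X]$. Thus, writing $Q\colon FBL[X]\to FBL[X]/J$ for the quotient map, $\beta_X$ factors as $\beta_X=\Phi\circ Q$ for a canonical lattice isomorphism $\Phi\colon FBL[X]/J\to X$. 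This is exactly the configuration to which $\lambda$-projectivity of $X$ applies, with $Y=FBL[X]$ and the closed ideal $J$.

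Next I would set up the map to be lifted. Consider $T:=Q\circ\delta_X\colon X\to FBL[X]/J$, which is a lattice homomorphism as a composition of lattice homomorphisms. Since quotient maps are contractive and $\delta_X$ is a linear isometry, one has $\|T\|\leq\|Q\|\,\|\delta_X\|\leq 1$. Applying the definition of $\lambda$-projectivity to $Y=FBL[X]$, the ideal $J$, the quotient $Q$, and this $T$, we obtain a lattice homomorphism $\hat T\colon X\to FBL[X]$ with $Q\circ\hat T=T$ and $\|\hat T\|\leq\lambda\|T\|\leq\lambda$.

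It remains to check that $\alpha:=\hat T$ is a right-inverse of $\beta_X$. For each $x\in X$ we have $Q(\hat T(x))=T(x)=Q(\delta_X(x))$, so $\hat T(x)-\delta_X(x)\in J=\ker\beta_X$; applying $\beta_X$ gives $\beta_X(\hat T(x))=\beta_X(\delta_X(x))=x$, whence $\beta_X\circ\alpha=id_X$. Together with $\|\alpha\|\leq\lambda$, this is precisely statement~(i) of Proposition~\ref{prop:EquivalencesLLP}, so $X$ has the lattice-lifting property with constant~$\lambda$.

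The substance of the argument is not computational but structural: the main point is to recognize that $\beta_X$ exhibits $X$ as a quotient of $FBL[X]$ by a \emph{closed ideal}, which is what makes projectivity (a lifting property phrased purely in terms of quotients by ideals) directly applicable. The only quantitative care needed is the norm bookkeeping $\|T\|=\|Q\circ\delta_X\|\leq 1$, which guarantees that the projectivity constant $\lambda$ transfers to the lifting constant without loss; I do not anticipate any genuine obstacle beyond correctly identifying $J$ as an ideal and tracking these norms.
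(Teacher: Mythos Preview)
Your approach is essentially the same as the paper's: factor $\beta_X$ through the quotient $FBL[X]/\ker\beta_X$ and apply $\lambda$-projectivity to lift the inverse of the induced isomorphism. However, there is one incorrect justification. You write that $T:=Q\circ\delta_X$ ``is a lattice homomorphism as a composition of lattice homomorphisms,'' but $\delta_X\colon X\to FBL[X]$ is \emph{not} a lattice homomorphism in general --- it is only a linear isometry (indeed, if it were a lattice homomorphism, the whole theory would trivialize). The conclusion that $T$ is a lattice homomorphism is nevertheless correct, because $\Phi\circ T=\Phi\circ Q\circ\delta_X=\beta_X\circ\delta_X=id_X$ forces $T=\Phi^{-1}$, which is a lattice isomorphism (and in fact a lattice isometry, giving $\|T\|=1$). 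This is exactly how the paper sets things up: it defines $T$ directly as the inverse of the induced lattice isometry rather than as $Q\circ\delta_X$. Once you replace your justification with this observation, the argument is complete and matches the paper's.
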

\begin{proof}
Suppose $X$ is a $\lambda$-projective Banach lattice for some constant~$\lambda \geq 1$. 
Since $\beta_X:FBL[X]\rightarrow X$ is a surjective lattice homomorphism, it factors as $\beta_X=S\circ Q$, 
where 
$$
	Q: FBL[X]\rightarrow FBL[X]/\ker(\beta_X)
$$ 
is the canonical quotient map and
$$
	S: FBL[X]/\ker(\beta_X) \rightarrow X
$$ 
is a lattice isometry. Now, we consider the lattice isometry $T:=S^{-1}: X\rightarrow FBL[X]/\ker(\beta_X)$. By hypothesis, there exists a lattice homomorphism 
$\alpha:X\rightarrow FBL[X]$ such that $T=Q \circ \alpha$ and $\|\alpha\|\leq \lambda$. 
Therefore, we have $\beta_X\circ \alpha=S\circ Q \circ \alpha=S \circ T = id_X$, as required. 
\end{proof}

Projectivity for free Banach lattices is a quite restrictive property, namely: if $E$ is a Banach space without the Schur property, then $FBL[E]$ is not projective, 
see~\cite[Theorem 1.3]{AMR20_jmaa}. However, all free Banach lattices satisfy the isometric lattice-lifting property (this is entirely analogous to \cite[Lemma 2.10]{GK}):

\begin{prop}\label{prop:FBL}
$FBL[E]$ has the isometric lattice-lifting property for every Banach space $E$.
\end{prop}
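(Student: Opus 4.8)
The plan is to exploit the universal property of the free Banach lattice twice: once to produce the candidate lattice homomorphism, and once (through its uniqueness clause) to verify the required identity. Throughout I write $X:=FBL[E]$, so that the goal becomes finding a lattice homomorphism $T:X\to FBL[X]$ with $\|T\|=1$ and $\beta_X\circ T=id_X$.

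First I would consider the bounded linear operator $\delta_X\circ\delta_E:E\to FBL[X]$, which is a composition of linear isometries and hence has norm $1$. Applying the universal property of $X=FBL[E]$ to this operator yields a unique lattice homomorphism $T:X\to FBL[X]$ satisfying $T\circ\delta_E=\delta_X\circ\delta_E$ and $\|T\|=\|\delta_X\circ\delta_E\|=1$. This $T$ is my candidate, and the norm requirement for the \emph{isometric} lattice-lifting property is thereby built in for free.

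It then remains to check $\beta_X\circ T=id_X$. Both $\beta_X\circ T$ and $id_X$ are lattice homomorphisms from $X=FBL[E]$ into $X$, so by the uniqueness clause in the universal property of $FBL[E]$ it suffices to verify that they agree after precomposition with $\delta_E$. Indeed, $\beta_X\circ T\circ\delta_E=\beta_X\circ\delta_X\circ\delta_E=\delta_E$, using first the defining identity of $T$ and then $\beta_X\circ\delta_X=id_X$; on the other hand $id_X\circ\delta_E=\delta_E$ trivially. Since both maps are lattice homomorphisms out of $FBL[E]$ whose restriction to $\delta_E(E)$ equals the bounded linear operator $\delta_E:E\to X$, they must coincide, giving $\beta_X\circ T=id_X$ and completing the argument.

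I do not expect a serious obstacle here: the whole proof is a formal diagram chase, and the only point demanding care is to invoke uniqueness at the right level, namely that a lattice homomorphism out of $FBL[E]$ is completely determined by its values on the natural copy $\delta_E(E)$. The construction directly mirrors the linear argument of Godefroy and Kalton for Lipschitz-free spaces referenced in the statement, with the universal property of $FBL[E]$ taking the place of that of $\mathcal F(E)$.
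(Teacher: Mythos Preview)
Your proof is correct and is essentially the same as the paper's: both use the universal property of $FBL[E]$ applied to the isometry $\delta_{FBL[E]}\circ\delta_E$ to obtain the lattice homomorphism, and then verify the identity $\beta_{FBL[E]}\circ T=id_{FBL[E]}$ by checking it on the generators $\delta_E(E)$. The only cosmetic difference is that you phrase the last step via the uniqueness clause of the universal property, while the paper phrases it as two lattice homomorphisms agreeing on a generating set; these are of course equivalent.
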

\begin{proof}
Consider the isometric embedding 
$$
	T:=\delta_{FBL[E]}\circ \delta_E: E\rightarrow FBL[FBL[E]]
$$
and the lattice homomorphism $\hat T:FBL[E]\rightarrow FBL[FBL[E]]$ satisfying $\hat T \circ \delta_E=T$ and $\|\hat T\|=\|T\|=1$. 
We claim that $\beta_{FBL[E]} \circ \hat T=id_{FBL[E]}$. Indeed, given any $x\in E$ we have
$$
	(\beta_{FBL[E]} \circ \hat T) (\delta_E(x))=\beta_{FBL[E]}(T(x))=\beta_{FBL[E]}\big(\delta_{FBL[E]}(\delta_E(x))\big)= \delta_E(x).
$$
Since $\beta_{FBL[E]}\circ \hat T$ and $id_{FBL[E]}$ are lattice homomorphisms and the sublattice 
generated by $\{\delta_E(x):x\in E\}$ is~$FBL[E]$, it follows that $\beta_{FBL[E]} \circ \hat T= id_{FBL[E]}$.
\end{proof}

Any Banach lattice $X$ having the lattice-lifting property is lattice isomorphic to a sublattice of~$FBL[X]$.
Therefore, any property of free Banach lattices which is inherited by sublattices holds for Banach lattices having the lattice-lifting property. Corollary~\ref{cor:necessary} below
is obtained via this argument when applied to a couple of Banach lattice properties.

A Banach lattice $X$ is said to satisfy the {\em $\sigma$-bounded chain condition} if $X_+\setminus\{0\}$ can be written as a countable union 
$X_+\setminus \{0\}=\bigcup_{n\geq 2} \mathcal F_n$ such that, for each $n\geq 2$ and each $\mathcal G\sub \mathcal F_n$ with cardinality~$n$, there exist distinct
$x,y\in \mathcal G$ such that $x\wedge y\neq 0$. The $\sigma$-bounded chain condition
is stronger than the {\em countable chain condition} (every uncountable subset of~$X_+$ contains two distinct elements which are not disjoint).

\begin{rem}\label{rem:sigma-bcc}
Let $X$ be a Banach lattice for which there is a sequence $(\phi_n)$ in $Hom(X,\mathbb R)$ which separates 
the points of~$X$.
Then $X$ has the $\sigma$-bounded chain condition. Indeed, it suffices to consider 
$$
	\mathcal F_n:=\{x\in X_+: \ \phi_m(x)\neq 0 \text{ for some }m<n\}
$$
for all $n\geq 2$. In particular, any sublattice of~$\ell_\infty$ has the $\sigma$-bounded chain condition.
\end{rem}

\begin{cor}\label{cor:necessary}
If a Banach lattice $X$ has the lattice-lifting property, then:
\begin{enumerate}
\item[(i)] $Hom(X,\mathbb R)$ separates the points of~$X$.
\item[(ii)] $X$ satisfies the $\sigma$-bounded chain condition.
\end{enumerate}
\end{cor}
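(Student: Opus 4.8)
The plan is to use the lattice-lifting property to realise $X$ as a sublattice of $FBL[X]$ and then transfer to~$X$ two properties that every free Banach lattice shares with its sublattices. Concretely, fix a lattice homomorphism $T\colon X\to FBL[X]$ with $\beta_X\circ T=id_X$. Then $\|Tx\|\geq\|x\|$, so $T$ is an isomorphic embedding; being a bijective lattice homomorphism onto its (closed) range, it identifies $X$ with the sublattice $Y:=T(X)$ of $FBL[X]$. It therefore suffices to prove each assertion for~$Y$, and both transfers will rest on the elementary fact that finite lattice operations in a closed vector sublattice coincide with those in the ambient lattice, together with the description $Hom(FBL[X],\mathbb{R})=\{\varphi_{x^*}:x^*\in X^*\}$.

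For (i) I would first note that $\{\varphi_{x^*}:x^*\in X^*\}$ separates the points of $FBL[X]$: elements of $FBL[X]\subseteq H_0[X]$ are functions on $X^*$, so if $f\neq g$ there is $x^*\in X^*$ with $f(x^*)\neq g(x^*)$, i.e.\ $\varphi_{x^*}(f)\neq\varphi_{x^*}(g)$. Consequently the compositions $\varphi_{x^*}\circ T$, each a lattice homomorphism from $X$ to $\mathbb{R}$, separate the points of~$X$: if $x\neq y$ then $T(x)\neq T(y)$ by injectivity of~$T$, whence $\varphi_{x^*}(T(x))\neq\varphi_{x^*}(T(y))$ for some $x^*$. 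Thus $Hom(X,\mathbb{R})$ separates points.

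For (ii) I would first observe that the $\sigma$-bounded chain condition passes to sublattices: if $FBL[X]_+\setminus\{0\}=\bigcup_{n\geq 2}\mathcal{F}_n$ witnesses it, then $\mathcal{F}_n\cap Y$ works for $Y$, since $Y_+\setminus\{0\}\subseteq FBL[X]_+\setminus\{0\}$ and meets of elements of $Y$ are computed the same way in $Y$ and in $FBL[X]$. So it is enough to prove that every free Banach lattice has the $\sigma$-bounded chain condition. When $X$ is separable this follows from Remark~\ref{rem:sigma-bcc}: each $f\in FBL[X]$ is $w^*$-continuous on the $w^*$-compact ball $B_{X^*}$ (the generators $\delta_X(x)$ are $w^*$-continuous, the lattice and linear operations preserve this, and $\|\cdot\|_{FBL[X]}$ dominates the supremum norm on $B_{X^*}$, so norm limits stay $w^*$-continuous there); fixing a countable $w^*$-dense set $\{x_m^*:m\in\nat\}\subseteq B_{X^*}$, every $f\in FBL[X]_+\setminus\{0\}$ is strictly positive on the nonempty $w^*$-open set $\{x^*\in B_{X^*}:f(x^*)>0\}$, hence $f(x_m^*)>0$ for some $m$, so the sequence $(\varphi_{x_m^*})_{m\in\nat}\subseteq Hom(FBL[X],\mathbb{R})$ separates points and Remark~\ref{rem:sigma-bcc} applies.

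The main obstacle is the non-separable case of this last step, where $B_{X^*}$ need not be $w^*$-separable and no countable family $\varphi_{x^*}$ can separate points, so Remark~\ref{rem:sigma-bcc} is no longer directly available. Here I would exploit that elements of $FBL[X]$ are separably determined: each $f\in FBL[X]$ lies in $FBL[X_0]$ for some separable subspace $X_0\subseteq X$, itself a sublattice on which the separable argument runs. The idea is then to reduce an arbitrary would-be antichain to a single separable subspace by a $\Delta$-system/pressing-down argument, using that two disjoint nonzero positive elements cannot depend on independent families of coordinates (otherwise one could prescribe the two restrictions simultaneously and make both positive at one point). Converting this reduction into an honest \emph{countable} decomposition of $FBL[X]_+\setminus\{0\}$, rather than merely the countable chain condition, is the delicate point, and I expect it to be where the real work lies.
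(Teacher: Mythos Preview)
Your argument for (i), and the overall strategy for (ii) of passing the $\sigma$-bounded chain condition from $FBL[X]$ down to the sublattice~$T(X)\cong X$, are exactly what the paper does. The paper, however, does not attempt to prove that $FBL[E]$ has the $\sigma$-bounded chain condition: it simply cites \cite[Theorem~1.2]{APR} for that fact, valid for arbitrary~$E$, and then notes (as you do) that the property passes to sublattices. So the content of your proposal beyond the paper's own proof is an attempted reproof of the result in~\cite{APR}.

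Your separable case is fine and self-contained: the $w^*$-continuity of elements of $FBL[X]$ on $B_{X^*}$ plus a countable $w^*$-dense set in $B_{X^*}$ indeed feeds directly into Remark~\ref{rem:sigma-bcc}. The genuine gap is the non-separable case, and you are candid that your sketch does not close it. Two specific issues: first, the $\Delta$-system/pressing-down heuristic you outline is tailored to producing the countable chain condition (forbidding uncountable antichains), not to exhibiting a \emph{single countable partition} of $FBL[X]_+\setminus\{0\}$ with the required bounded-intersection property; the $\sigma$-bcc is strictly stronger than ccc in general and does not follow from a separable-reduction argument alone. Second, ``each $f\in FBL[X]$ lies in $FBL[X_0]$'' needs care: $FBL[X_0]$ and $FBL[X]$ are spaces of functions on different duals, so one must work instead with the notion of ``depending on countably many coordinates'' and make precise how finite supports interact, which is exactly what \cite{APR} does to build an explicit decomposition. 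In short, your plan recognises the right ingredient (separable determination of elements of $FBL[X]$) but stops before the nontrivial combinatorics that turns it into a $\sigma$-bcc witness; the paper delegates that work to~\cite{APR}.
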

\begin{proof}
For any Banach space $E$, the set $Hom(FBL[E],\mathbb R)$ separates the points of~$FBL[E]$ 
(bear in mind the description of $Hom(FBL[E],\mathbb R)$ given in the introduction)
and $FBL[E]$ satisfies the $\sigma$-bounded chain condition, see \cite[Theorem 1.2]{APR}.
Both properties are clearly inherited by sublattices.
\end{proof}

\begin{example}\label{example:Lp}
\begin{enumerate}
\item[(i)] $L_p(\mu)$ fails the lattice-lifting property for any $1\leq p\leq \infty$ whenever the measure $\mu$ is not purely atomic,
because in this case $Hom(L_p(\mu),\mathbb R)$ does not separate those functions supported on the non-atomic part of the measure space.
\item[(ii)] $c_0(\Gamma)$ and $\ell_p(\Gamma)$ fail the lattice-lifting property for any $1\leq p\leq \infty$ whenever $\Gamma$ is an uncountable set,
because such Banach lattices fail the countable chain condition.
\end{enumerate}
\end{example}

In Example~\ref{example:torus} and Corollary~\ref{cor:sublatticefails} below we will show that, in general, the lattice-lifting property is not inherited
by sublattices. However, for lattice-complemented sublattices the situation is different:

\begin{prop}\label{prop:sublatticecomplemented}
Let $X$ be a Banach lattice having the lattice-lifting property. If $Y\sub X$ is a lattice-complemented sublattice, then $Y$ 
has the lattice-lifting property.
\end{prop}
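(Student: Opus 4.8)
The plan is to exploit the functoriality of the free-Banach-lattice construction together with the naturality of the barycenter maps. Write $j\colon Y\hookrightarrow X$ for the inclusion, which is an isometric lattice homomorphism since $Y$ is a sublattice, and let $P\colon X\to Y$ be the lattice homomorphism witnessing that $Y$ is lattice-complemented, so that $P\circ j=id_Y$. Applying the universal property of $FBL[X]$ to the bounded linear operator $\delta_Y\circ P\colon X\to FBL[Y]$, I obtain a lattice homomorphism, which I denote $FBL[P]\colon FBL[X]\to FBL[Y]$, characterized by $FBL[P]\circ\delta_X=\delta_Y\circ P$ and satisfying $\|FBL[P]\|=\|\delta_Y\circ P\|=\|P\|$.

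The key identity I would establish is the naturality relation
$$\beta_Y\circ FBL[P]=P\circ\beta_X.$$
Both sides are lattice homomorphisms from $FBL[X]$ to $Y$, so to check they coincide it suffices to evaluate them on the generating set $\{\delta_X(x):x\in X\}$. On such an element the left-hand side gives $\beta_Y(FBL[P](\delta_X(x)))=\beta_Y(\delta_Y(P(x)))=P(x)$, while the right-hand side gives $P(\beta_X(\delta_X(x)))=P(x)$; hence the two homomorphisms agree on the generators and therefore everywhere.

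Next I would invoke the lattice-lifting property of $X$: there is a lattice homomorphism $\alpha\colon X\to FBL[X]$ with $\beta_X\circ\alpha=id_X$ and, if $X$ has the property with constant $\lambda$, with $\|\alpha\|\leq\lambda$. I then define
$$\alpha_Y:=FBL[P]\circ\alpha\circ j\colon Y\to FBL[Y].$$
This is a composition of lattice homomorphisms, hence a lattice homomorphism, and using the naturality relation together with $\beta_X\circ\alpha=id_X$ and $P\circ j=id_Y$ one computes
$$\beta_Y\circ\alpha_Y=(\beta_Y\circ FBL[P])\circ\alpha\circ j=P\circ(\beta_X\circ\alpha)\circ j=P\circ j=id_Y,$$
so $Y$ has the lattice-lifting property. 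Tracking norms, $\|\alpha_Y\|\leq\|FBL[P]\|\,\|\alpha\|\,\|j\|=\|P\|\,\|\alpha\|\leq\lambda\|P\|$, so $Y$ inherits the property with constant $\lambda\|P\|$.

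The argument is short and the work is conceptual rather than computational: one must set up the induced map $FBL[P]$ correctly via the universal property and verify the naturality square for the barycenter. I do not expect a serious obstacle, since everything is forced by agreement on the generators $\delta_X(X)$ and $\delta_Y(Y)$; the only mild subtlety is to make sure that every map in sight is genuinely a lattice homomorphism, so that ``agreement on generators implies equality'' is legitimate, which holds because $j$, $P$, $\alpha$, $\beta_X$, $\beta_Y$ and the functorially induced $FBL[P]$ are all lattice homomorphisms by construction.
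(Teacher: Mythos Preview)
Your proof is correct and essentially identical to the paper's: your $FBL[P]$ is precisely the paper's map $S$ (both are the unique lattice homomorphism with $S\circ\delta_X=\delta_Y\circ P$), and your naturality identity $\beta_Y\circ FBL[P]=P\circ\beta_X$ and the construction $\alpha_Y=FBL[P]\circ\alpha\circ j$ match the paper line for line. The only addition is your explicit norm bound $\|\alpha_Y\|\leq\lambda\|P\|$, which the paper omits.
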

\begin{proof} Let $j:Y\rightarrow X$ denote the identity embedding and let $P:X\rightarrow Y$ be a lattice homomorphism such that $P|_Y=id_Y$. 
Let $\alpha_X:X\rightarrow FBL[X]$ be a lattice homomorphism such that $\beta_X\circ \alpha_X=id_X$. 
Let $S:FBL[X]\rightarrow FBL[Y]$ be the unique lattice homomorphism satisfying $S\circ \delta_X= \delta_Y \circ P$. 
We claim that the lattice homomorphism $\alpha_Y:=S \circ \alpha_X \circ j:Y\rightarrow FBL[Y]$ satisfies $\beta_Y\circ \alpha_Y=id_Y$. 
Indeed, since both $\beta_Y \circ S$ and $P \circ \beta_X$ are lattice homomorphisms and 
$$
	\beta_Y \circ S \circ \delta_X=\beta_Y \circ \delta_Y \circ P = P = P \circ \beta_X \circ \delta_X,
$$
we have $\beta_Y \circ S=P \circ \beta_X$. Hence
$$
	\beta_Y \circ \alpha_Y=\beta_Y \circ S \circ \alpha_X \circ j=P\circ \beta_X \circ \alpha_X \circ j=id_Y,
$$
as claimed.
\end{proof}

As a consequence of Propositions~\ref{prop:FBL} and~\ref{prop:sublatticecomplemented} we have:

\begin{cor}\label{cor:char}
A Banach lattice $X$ has the lattice-lifting property if and only if it is lattice isomorphic to a lattice-complemented 
sublattice of~$FBL[X]$.
\end{cor}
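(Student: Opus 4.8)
The plan is to prove Corollary~\ref{cor:char} by establishing both implications separately, using the two propositions immediately preceding it.

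For the forward implication, suppose $X$ has the lattice-lifting property, so there is a lattice homomorphism $\alpha:X\to FBL[X]$ with $\beta_X\circ\alpha=id_X$. I would observe that this makes $\alpha$ a lattice isomorphism onto its image $Y:=\alpha(X)\sub FBL[X]$: it is injective because it has a left inverse, and since $\beta_X\circ\alpha=id_X$ with both maps being lattice homomorphisms, $\alpha$ is an isomorphism onto $Y$ with inverse $\beta_X|_Y$. The restriction $P:=\alpha\circ\beta_X:FBL[X]\to Y$ is then a lattice homomorphism satisfying $P|_Y=id_Y$ (indeed, for $y=\alpha(x)\in Y$ we have $P(y)=\alpha(\beta_X(\alpha(x)))=\alpha(x)=y$), which exhibits $Y$ as a lattice-complemented sublattice of $FBL[X]$. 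Thus $X$ is lattice isomorphic to a lattice-complemented sublattice of~$FBL[X]$.

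For the reverse implication, suppose $X$ is lattice isomorphic to a lattice-complemented sublattice $Y\sub FBL[X]$; let $u:X\to Y$ be the lattice isomorphism. The key point is that $FBL[X]=FBL[E]$ where $E$ is the underlying Banach space of $X$, so by Proposition~\ref{prop:FBL} the ambient lattice $FBL[X]$ itself has the lattice-lifting property. Applying Proposition~\ref{prop:sublatticecomplemented} to the lattice-complemented sublattice $Y\sub FBL[X]$ then shows that $Y$ has the lattice-lifting property, and since $X$ is lattice isomorphic to~$Y$, so does~$X$.

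The only delicate point is that the lattice-lifting property, as defined, refers intrinsically to the free Banach lattice generated by the space in question, so I must check it is preserved under lattice isomorphism. This follows because a lattice isomorphism $u:X\to Y$ induces, via the universal property, a lattice isomorphism $FBL[X]\cong FBL[Y]$ commuting with the respective $\delta$ and $\beta$ maps; transporting the lifting $\alpha_Y$ for $Y$ through these identifications yields a lifting for~$X$. This is a routine naturality argument that I expect to be the main bookkeeping obstacle, though it presents no real difficulty since all the required commutations are determined on the generating copies of the spaces and extend uniquely by the universal property of the free construction.
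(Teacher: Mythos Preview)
Your proposal is correct and matches the paper's intended argument: the forward implication is immediate from the definition (the lifting $\alpha$ exhibits $X$ as a lattice-complemented sublattice via the projection $\alpha\circ\beta_X$), and the reverse implication combines Proposition~\ref{prop:FBL} with Proposition~\ref{prop:sublatticecomplemented} exactly as the paper indicates. The isomorphism-invariance check you flag is indeed tacit in the paper but routine, and your naturality sketch handles it correctly.
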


\begin{rem}\label{rem:quotients}
In general, the lattice-lifting property is not inherited by lattice quotients. For instance, the space $L_1[0,1]$ 
fails the lattice-lifting property (see Example~\ref{example:Lp})
and is a lattice quotient of~$FBL[\ell_1]$ (which has the lattice-lifting property, by Proposition~\ref{prop:FBL}). 
Indeed, since $L_1[0,1]$ is a separable Banach space, there is a surjective bounded linear operator 
$T:\ell_1\to L_1[0,1]$, hence the unique lattice homomorphism $\hat T: FBL[\ell_1] \to L_1[0,1]$ satisfying $\hat T \circ \delta_{\ell_1}=T$ is
surjective as well.
\end{rem}

\section{Retracts and the lattice-lifting property}\label{section:CK}

Given a topological space~$T$, a subspace $A \sub T$ said to be a {\em retract} (resp., {\em neighborhood retract}) 
of~$T$ if there is a continuous map from $T$ to~$A$ which is the identity on~$A$ (resp., there is an open set $U \sub T$ containing~$A$ such that
$A$ is a retract of~$U$). Such a map is called a {\em retraction}. 
Absolute neighborhood retracts play an important role in the study of projective Banach lattices (see \cite{dPW15} and~\cite{AMR20_jmaa}). They
also appear in a natural way when dealing with the lattice-lifting property. 
Recall that a compact space is said to be an {\em absolute retract} or {\em AR} (resp., {\em absolute neighborhood retract} or {\em ANR})
if it is a retract (resp., neighborhood retract) of any compact space containing it.
The following elementary characterization will be useful:

\begin{fact}\label{fact:ARandANR}
A compact space $K$ is an AR (resp. an ANR) if and only if it is homeomorphic to a retract (resp., neighborhood retract) 
of~$[0,1]^\Gamma$ for some non-empty set~$\Gamma$.
\end{fact}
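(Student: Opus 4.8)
The plan is to prove both equivalences at once, relying on two classical facts: every compact (Hausdorff) space embeds homeomorphically into a Tychonoff cube, and continuous maps into such a cube extend along closed embeddings via Tietze's extension theorem applied coordinatewise.

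For the forward implications I would start from the observation that a compact space $K$ is completely regular, so the evaluation map $e:K\to[0,1]^\Gamma$ with $\Gamma:=C(K,[0,1])$, defined by $e(x):=(f(x))_{f\in\Gamma}$, is a homeomorphism onto its image. By Tychonoff's theorem $[0,1]^\Gamma$ is a compact space containing a homeomorphic copy of $K$, so applying the very definition of AR (resp.\ ANR) to this particular superspace immediately produces a retraction (resp.\ neighborhood retraction) of $[0,1]^\Gamma$ onto that copy.

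For the converse I would assume $K$ is homeomorphic to a retract $R\subseteq[0,1]^\Gamma$ and, identifying $K$ with $R$, fix the retraction $r:[0,1]^\Gamma\to K$. Given an arbitrary compact space $L$ containing $K$ as a closed subspace, the inclusion $K\hookrightarrow[0,1]^\Gamma$ is a continuous map defined on the closed set $K$; since $L$ is normal, Tietze's theorem in each coordinate yields a continuous extension $F:L\to[0,1]^\Gamma$ with $F|_K=\mathrm{id}_K$. Then $r\circ F:L\to K$ is the identity on $K$, hence a retraction, proving $K$ is an AR. In the ANR case $r$ is defined only on an open neighborhood $U\supseteq K$ of the cube; putting $V:=F^{-1}(U)$ gives an open set of $L$ with $K\subseteq V$, and $r\circ F|_V:V\to K$ is the desired neighborhood retraction.

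This is mostly an assembly of standard results, so I expect no deep obstacle; the points needing care are checking that the evaluation map is a genuine embedding (which uses that continuous $[0,1]$-valued functions separate points from closed sets, i.e.\ complete regularity) and, in the ANR case, verifying that $F(K)\subseteq U$ so that $V$ really is a neighborhood of $K$ on which $r\circ F$ is well defined.
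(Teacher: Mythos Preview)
Your argument is correct and is precisely the standard proof of this elementary characterization. The paper itself does not supply a proof: the statement is labeled as a \emph{Fact} and is quoted without justification, being treated as folklore. So there is no ``paper's own proof'' to compare against; your write-up would serve perfectly well as a proof of the fact were one to be included.
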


The main result of this section is:

\begin{thm}\label{thm:CK}
Let $K$ be a compact space such that $C(K)$ has the lattice-lifting property. Then $K$ is a neighborhood retract of~$(B_{C(K)^\ast},w^*)$.
If in addition $K$ is metrizable, then $K$ is an ANR.
\end{thm}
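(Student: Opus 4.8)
The plan is to exploit the lattice-lifting property to produce a lattice homomorphism $\alpha:C(K)\to FBL[C(K)]$ with $\beta_{C(K)}\circ\alpha=id_{C(K)}$, and then read off a retraction at the level of the spaces of real-valued lattice homomorphisms. The key observation is that the excerpt records an explicit description of $Hom(FBL[E],\mathbb R)$: for a Banach space $E$, this set is exactly $\{\varphi_{x^*}:x^*\in E^*\}$, where $\varphi_{x^*}(f)=f(x^*)$. Applying this with $E=C(K)$, every real-valued lattice homomorphism on $FBL[C(K)]$ is evaluation at some $\mu\in C(K)^*$. On the other side, the natural candidates for real-valued lattice homomorphisms on $C(K)$ are the (nonnegative scalar multiples of) Dirac point evaluations $\delta_t$, $t\in K$; this is the standard Kakutani-type identification, so that $K$ (together with the zero functional and positive rescalings) sits inside $Hom(C(K),\mathbb R)\cap B_{C(K)^*}$.

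The mechanism is precomposition. A lattice homomorphism $\alpha:C(K)\to FBL[C(K)]$ induces, by duality, a map $\alpha^*$ sending a lattice homomorphism $\psi:FBL[C(K)]\to\mathbb R$ to $\psi\circ\alpha:C(K)\to\mathbb R$, which is again a lattice homomorphism; likewise $\beta_{C(K)}$ induces precomposition in the other direction. The identity $\beta_{C(K)}\circ\alpha=id$ dualizes to $\alpha^*\circ\beta_{C(K)}^*=id$ on $Hom(C(K),\mathbb R)$, which is precisely a retraction statement: $Hom(C(K),\mathbb R)$ is a retract of $Hom(FBL[C(K)],\mathbb R)$. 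Now I would restrict attention to the point-mass part. First I would show $\beta_{C(K)}^*$ carries $\delta_t$ to the evaluation functional $\varphi_{\delta_t}$ (composition with the barycenter is compatible with $\delta_X$), so the copy of $K$ embeds into $Hom(FBL[C(K)],\mathbb R)=\{\varphi_\mu:\mu\in C(K)^*\}\cong C(K)^*$ via $t\mapsto\delta_t\in(B_{C(K)^*},w^*)$; and that $\alpha^*$ maps each $\varphi_\mu$ back into $Hom(C(K),\mathbb R)$, i.e.\ into a nonnegative multiple of some $\delta_s$ or into $0$. Composing, one gets a $w^*$-continuous self-map of (an open piece of) $B_{C(K)^*}$ that fixes $K$ pointwise, which is the desired neighborhood retraction after identifying $K$ with $\{\delta_t:t\in K\}$.

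The main obstacle, and where care is needed, is the "neighborhood" rather than "global" nature of the retraction. The retraction $\alpha^*\circ\beta_{C(K)}^*$ is naturally defined on all of $Hom(FBL[C(K)],\mathbb R)$, but to land inside $(B_{C(K)^*},w^*)$ and fix $K$ I must control where points near $K$ are sent and ensure $w^*$-continuity together with the fixing of $\{\delta_t:t\in K\}$. The likely issue is that $\alpha^*\circ\beta_{C(K)}^*$ could collapse some functionals to $0$ (which corresponds to leaving $K$, since $0\notin K$ under the Dirac identification), so the retraction is only defined on the open set where the image avoids $0$; this open set contains $K$ because the retraction fixes each $\delta_t\neq 0$, and continuity then guarantees a whole neighborhood maps inside the desired region. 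Identifying $K$ correctly as a subset of $(B_{C(K)^*},w^*)$ via Diracs, verifying $w^*$-continuity of the induced maps, and carving out the neighborhood on which the composite retraction is well defined are the delicate points.

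For the final assertion, if $K$ is metrizable then $(B_{C(K)^*},w^*)$ is a metrizable compact convex subset of a locally convex space, hence affinely homeomorphic to a subset of $[0,1]^\Gamma$ for a countable $\Gamma$, and any metrizable compact convex set of this type is an AR; a neighborhood retract of an AR is an ANR, so $K$ is an ANR by Fact~\ref{fact:ARandANR}. I would invoke this last step as a standard consequence of the structure of $(B_{C(K)^*},w^*)$ together with Fact~\ref{fact:ARandANR}.
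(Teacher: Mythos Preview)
Your approach is correct and yields a valid proof, but it follows a different route from the paper's. The paper does not dualize the lifting map $\alpha:C(K)\to FBL[C(K)]$ directly; instead it passes through $C(L)$ with $L=(B_{C(K)^*},w^*)$. Concretely, the paper observes that the integration operator $E:C(K)\to C(L)$, $E(f)(\mu)=\int_K f\,d\mu$, is a bounded linear right inverse to the restriction $R:C(L)\to C(K)$; formulation~(iii) of Proposition~\ref{prop:EquivalencesLLP} then produces a lattice homomorphism $S:C(K)\to C(L)$ with $R\circ S=id_{C(K)}$. The classical representation of lattice homomorphisms between $C$-spaces writes $S(f)(y)=w(y)f(\pi(y))$ for some continuous $w:L\to[0,\infty)$ and some $\pi:L\to K$ continuous off $w^{-1}(0)$, and $R\circ S=id$ forces $\pi|_K=id_K$ and $w|_K\equiv1$; hence $K$ is a retract of the open set $L\setminus w^{-1}(0)$. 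Your argument reaches the same endpoint by a parallel mechanism: you obtain $\mu\mapsto\alpha^*(\varphi_\mu)\in Hom(C(K),\mathbb R)$, write it as $c(\mu)\,\delta_{\pi(\mu)}$, and retract via $\pi$ on the open set $\{c>0\}$. The paper's route has the advantage of isolating a reusable lemma (Proposition~\ref{prop:extensionCK}: any compact $K\subset L$ with a linear extension operator is a neighborhood retract of $L$ whenever $C(K)$ has the lattice-lifting property), while your route stays closer to the definition and in fact anticipates the general mechanism of Proposition~\ref{prop:Xlattice-lifting}. For the metrizable clause the paper invokes Keller's theorem to identify $L$ with the Hilbert cube and then appeals to Fact~\ref{fact:ARandANR}; your ``neighborhood retract of an AR is an ANR'' is the same reasoning phrased slightly more abstractly.

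One notational slip to correct: the map that may collapse functionals to~$0$ is $\mu\mapsto\alpha^*(\varphi_\mu)$, not $\alpha^*\circ\beta_{C(K)}^*$. The latter equals the identity on all of $C(K)^*$ (since $\beta_{C(K)}\circ\alpha=id_{C(K)}$), so it never vanishes. The point is that $\beta_{C(K)}^*(\mu)=\varphi_\mu$ holds only when $\mu$ is itself a lattice homomorphism on $C(K)$ (this is exactly why the retraction fixes $K$); for a general $\mu\in B_{C(K)^*}$ the functional $\varphi_\mu$ is a lattice homomorphism on $FBL[C(K)]$ but $\beta_{C(K)}^*(\mu)=\mu\circ\beta_{C(K)}$ need not be one. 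Keep the identification $\mu\leftrightarrow\varphi_\mu$ (which is $\delta_{C(K)}^*$ restricted to $Hom(FBL[C(K)],\mathbb R)$) separate from $\beta_{C(K)}^*$ in your write-up and the argument goes through cleanly.
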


As usual, we identify $K$ with its canonical homeomorphic copy inside $B_{C(K)^*}$ (with the $w^*$-topology), which consists
of all evaluation functionals of the form $ h \mapsto h(t)$ for every $h\in C(K)$, where $t\in K$.

We stress that, for an arbitrary compact space~$K$, the space $C(K)$ is projective whenever $K$ is an
ANR, see~\cite[Theorem 1.4]{AMR20_jmaa}. So, bearing in mind Proposition~\ref{prop:projective} and Theorem~\ref{thm:CK}, we can summarize the
case of metrizable compacta as follows.

\begin{cor}\label{cor:CK}
Let $K$ be a compact metrizable space. The following statements are equivalent:
\begin{enumerate}
\item[(i)] $K$ is an ANR.
\item[(ii)] $C(K)$ is projective.
\item[(iii)] $C(K)$ has the lattice-lifting property. 
\end{enumerate}
\end{cor}

In order to prove Theorem~\ref{thm:CK} we need the following proposition:

\begin{prop}\label{prop:extensionCK}
Let $K$ and $L$ be compact spaces such that $K\sub L$. Suppose that:
\begin{enumerate}
\item[(i)] There is a bounded linear operator $E: C(K) \to C(L)$ such that $E(f)|_K=f$ for all $f\in C(K)$.
\item[(ii)] $C(K)$ has the lattice-lifting property.
\end{enumerate}
Then $K$ is a neighborhood retract of~$L$.
\end{prop}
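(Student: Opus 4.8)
The plan is to use the lattice-lifting property to upgrade the linear extension operator $E$ into a \emph{lattice homomorphism} extension operator, and then to read off a neighborhood retraction from its adjoint acting on point evaluations.

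First I would observe that the restriction map $R\colon C(L)\to C(K)$, $R(g):=g|_K$, is a lattice homomorphism (it preserves the pointwise operations and has norm $\leq 1$), and that hypothesis~(i) says precisely $R\circ E=id_{C(K)}$. Applying the lattice-lifting property of $C(K)$ in the form of Proposition~\ref{prop:EquivalencesLLP}(iii), with $Y=C(L)$, $\pi=R$ and $T=E$, yields a \emph{lattice homomorphism} $S\colon C(K)\to C(L)$ with $R\circ S=id_{C(K)}$; that is, $S(f)|_K=f$ for every $f\in C(K)$.

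Next I would consider the map $\Phi\colon L\to (C(K)^*,w^*)$ given by $\Phi(t):=\delta_t\circ S=S^*(\delta_t)$, where $\delta_t\in C(L)^*$ is evaluation at $t$. Since $S^*$ is $w^*$-$w^*$ continuous and $t\mapsto\delta_t$ is continuous from $L$ to $(C(L)^*,w^*)$, the map $\Phi$ is $w^*$-continuous. As a composition of lattice homomorphisms, each $\Phi(t)$ lies in $Hom(C(K),\mathbb R)$, and the classical description of the real-valued lattice homomorphisms on $C(K)$ (positive multiples of point evaluations) gives $\Phi(t)=\lambda(t)\,\delta_{\sigma(t)}$ for a unique scalar $\lambda(t)\geq 0$ and, when $\lambda(t)>0$, a unique point $\sigma(t)\in K$, where I write $\delta_s$ for evaluation at $s\in K$ inside $C(K)^*$. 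Evaluating at the constant function $\mathbf 1_K$ gives $\lambda(t)=S(\mathbf 1_K)(t)$, so $\lambda=:g_0$ is a continuous function on $L$; moreover, for $t\in K$ the identity $S(f)|_K=f$ forces $\Phi(t)=\delta_t$, whence $g_0(t)=1$ and $\sigma(t)=t$.

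Finally I would set $U:=\{t\in L:g_0(t)>0\}$, an open subset of $L$ containing $K$, and define $r\colon U\to K$ by $r(t):=\sigma(t)$, which is well defined since $\lambda(t)=g_0(t)>0$ on $U$. On $U$ one has $\delta_{r(t)}=g_0(t)^{-1}\Phi(t)$, which is $w^*$-continuous in $t$; composing with the inverse of the homeomorphism $K\to\{\delta_s:s\in K\}\sub (C(K)^*,w^*)$ shows that $r$ is continuous, while $r|_K=id_K$. Thus $r$ retracts the open neighborhood $U$ onto $K$, proving that $K$ is a neighborhood retract of $L$. The step requiring the most care is the continuity of $r$: the scaling factor $\lambda(t)$ must be controlled, which is why I isolate it as the continuous function $g_0$ and restrict to where it is positive, so that normalizing $\Phi(t)$ lands inside the homeomorphic copy of $K$ and the point $\sigma(t)$ is recovered continuously.
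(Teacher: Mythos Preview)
Your proof is correct and follows essentially the same route as the paper: both upgrade $E$ to a lattice homomorphism $S$ via Proposition~\ref{prop:EquivalencesLLP}(iii), then extract a retraction of an open neighborhood of~$K$ from the structure of~$S$. The only cosmetic difference is that the paper invokes the standard representation theorem for lattice homomorphisms between $C(K)$-spaces (writing $S(f)(y)=w(y)f(\pi(y))$ with $w$ continuous and $\pi$ continuous on $\{w>0\}$), whereas you derive this structure by hand via $S^*(\delta_t)\in Hom(C(K),\mathbb R)$; the resulting neighborhood and retraction are the same.
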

\begin{proof}
Let $R:C(L)\to C(K)$ be the surjective lattice homomorphism given by $R(g):=g|_K$ for all $g\in C(L)$. 
Since $R\circ E =id_{C(K)}$ and $C(K)$ has the lattice-lifting property, 
there is a lattice homomorphism $S:C(K)\to C(L)$ such that $R \circ S=id_{C(K)}$ (Proposition~\ref{prop:EquivalencesLLP}). 
By the general representation of lattice homomorphisms between spaces of continuous functions on compact spaces
(see, e.g., \cite[Theorem~2.34]{AB}), there exist a continuous map $w:L\to [0,\infty)$ and a map $\pi:L \to K$ which
is continuous on~$L\setminus w^{-1}(\{0\})$ such that $S$ is given by 
$$
	S(f)(y) = w(y) f(\pi(y))
	\quad
	\text{for all $y\in L$ and all $f\in C(K)$.} 
$$
The fact that $R\circ S = id_{C(K)}$ means that $w(x) f(\pi(x)) = f(x)$ for all $x\in K$ and all $f\in C(K)$. 
Clearly, this implies that $\pi$ is the identity on~$K$ and that $w(x)= 1$ for all $x\in K$. Thus, $K$ is a retract of the open set~$L\setminus w^{-1}(\{0\})$.
\end{proof}

\begin{proof}[Proof of Theorem~\ref{thm:CK}]
Write $L:=B_{C(K)^*}$ (equipped with the $w^*$-topology) and consider the bounded linear operator $E: C(K) \to C(L)$ given by
$$
	E(f)(\mu) := \int_K f \, d\mu 
	\quad \text{for all $f\in C(K)$ and all $\mu\in L$.}
$$
Here we identify the elements of $C(K)^*$ with regular signed measures on the Borel $\sigma$-algebra of~$K$, via
Riesz's representation theorem. Since $E(f)|_K=f$ for every $f\in K$ and $C(K)$ has the lattice-lifting property, we can apply Proposition~\ref{prop:extensionCK}
to deduce that $K$ is a neighborhood retract of~$L$. 

Suppose now that $K$ is metrizable. If $K$ is finite, then it is easy to check that it is an AR. If $K$ is infinite, then
$L$ is an infinite-dimensional compact convex metrizable subset of the locally convex space $(C(K)^*,w^*)$ and so $L$ is homeomorphic to the Hilbert cube~$[0,1]^\mathbb{N}$ by Keller's theorem (see, e.g., \cite[Theorem~12.37]{fab-ultimo}). Bearing in mind Fact~\ref{fact:ARandANR}, we conclude that $K$ is an ANR. 
\end{proof}

We do not know whether the metrizability assumption can be removed from the second statement of Theorem~\ref{thm:CK}:

\begin{prob}\label{prob:metrizability}
Let $K$ be a compact space such that $C(K)$ has the lattice-lifting property. Is $K$ an ANR?
\end{prob}

\begin{rem}\label{rem:path-connected}
If $K$ is a compact space such that $C(K)$ has the lattice-lifting property, then $K$ is a finite union of pathwise connected
closed subsets.
\end{rem}
\begin{proof} As before, we consider $B_{C(K)^*}$ equipped with the $w^*$-topology. 
By Theorem~\ref{thm:CK}, there exist an open set $K\sub V \sub B_{C(K)^\ast}$ and a continuous function $r: V \to K$ such that $r|_K=id_K$. 
For each $x\in K$ we choose a convex open set $V_x \sub B_{C(K)^*}$ such that $x\in V_x \sub \overline{V_x} \sub V$. 
Since $K$ is compact, there exist finitely many $x_1,\dots,x_n\in K$ such that $K \sub \bigcup_{i=1}^n V_{x_i}$. 
Then $K=\bigcup_{i=1}^n r(\overline{V_{x_i}})$. Each $\overline{V_{x_i}}$ is compact and pathwise connected (in fact, it is convex) and so 
$r(\overline{V_{x_i}})$ is compact and pathwise connected as well.
\end{proof}

\begin{example}\label{example:c}
Let $c$ be the Banach lattice of all convergent sequences of real numbers (with the coordinatewise order).
\begin{enumerate}
\item[(i)] $c$ fails the lattice-lifting property. Indeed, $c$ is lattice isometric to~$C(\mathbb N \cup \{\infty\})$, 
where $\mathbb N \cup \{\infty\}$ denotes the one-point compactification of~$\mathbb N$ equipped with the discrete topology.
The only connected non-empty subsets of $\mathbb N \cup \{\infty\}$ are the singletons, so Remark~\ref{rem:path-connected} implies
that $c$ fails the lattice-lifting property.
\item[(ii)] For each $n\in \nat$, let $\phi_n \in Hom(c,\mathbb R)$ be the $n$th-coordinate projection. Since 
the sequence $(\phi_n)$ separates the points of~$c$, this space satisfies the $\sigma$-bounded chain condition
(Remark~\ref{rem:sigma-bcc}). Therefore, the converse of Corollary~\ref{cor:necessary} does not hold in general.
\item[(iii)] We have a short exact sequence of lattice homomorphisms
$$
	0\To c_0 \stackrel{j}{\To} c \stackrel{L}{\To} \mathbb R \To 0,
$$
where $j$ is the canonical embedding and $L$ is the map that takes each sequence to its limit. On the one hand,
$\mathbb R$ is $1$-projective and so it has the isometric lattice-lifting property. On the other hand,
$c_0$ has the isometric lattice-lifting property as a consequence of Theorem~\ref{t:unconditional} below
(cf.~\cite{AMR20}). This shows that the isometric lattice-lifting property is not a 3-space property. 
\end{enumerate}
\end{example}

The following example and corollary show, in particular, that the lattice-lifting property is not stable under taking 
sublattices.

\begin{example}\label{example:torus}
{\em Let $\mathbb T \sub \mathbb{R}^2$ be the unit circumference. Then $C(\mathbb T)$ has the lattice-lifting property
and contains a sublattice failing the lattice-lifting property.} Indeed, the first statement follows from
Corollary~\ref{cor:CK} because $\mathbb T$ is an ANR, since it is a neighborhood retract of $[-1,1]^2$ (recall Fact \ref{fact:ARandANR}).
Let $K:=\mathbb T ^\nat$ (with the product topology). 
Observe that $K$ is metrizable but it is not an ANR. Indeed, if it were, as an infinite countable product of non-empty separable metric spaces, 
all its factors but finitely many would be AR (see, e.g., \cite[proof of Theorem~1.5.8]{vanMill}). However, $\mathbb T$ is not an AR since it is not 
a retract of $[-1,1]^2$ (see, e.g., \cite[Theorem~3.5.5]{vanMill}). From Theorem~\ref{thm:CK} it follows that $C(K)$ fails the lattice-lifting property.
We will show that $C(K)$ is lattice isometric to a sublattice of~$C(\mathbb T)$. To this end, it suffices to
check that there is a continuous surjection $f: \mathbb T \to K$, because in this case the composition map $T_f: C(K) \to C(\mathbb T)$ given by
$T_f(g):=g \circ f$ for all $g\in C(K)$ would be an isometric lattice embedding.   
Now, the existence of such an~$f$ follows from the fact that $[0,1]^{\mathbb N}$ is a continuous image of $[0,1]$, by the Hahn-Mazurkiewicz theorem
(see, e.g., \cite[Theorem~31.5]{willard}), bearing in mind that $\mathbb T$ is a continuous image of $[0,1]$ and vice versa. 
\end{example}

\begin{cor}\label{cor:sublatticefails}
Let $E$ be a Banach space with $dim(E)\geq 2$. Then $FBL[E]$ contains a sublattice failing the lattice-lifting property.
\end{cor}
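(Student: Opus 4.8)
The plan is to reduce the general statement to the concrete torus construction of Example~\ref{example:torus} by passing to a two-dimensional situation. The first step is to observe that, when $\dim(E)\geq 2$, the free Banach lattice $FBL[\mathbb{R}^2]$ embeds as a closed sublattice of $FBL[E]$. Indeed, choosing two linearly independent functionals in $E^*$ produces a surjective bounded linear operator $q:E\to\mathbb{R}^2$, and since $\mathbb{R}^2$ is finite dimensional $\ker(q)$ is complemented, so $q$ has a bounded linear right inverse $s:\mathbb{R}^2\to E$ with $q\circ s=id_{\mathbb{R}^2}$. The assignment $F\mapsto FBL[F]$ is functorial (every bounded operator $T$ extends, via the universal property, to a lattice homomorphism $FBL[T]$, and this assignment preserves compositions and identities), so $FBL[q]\circ FBL[s]=FBL[q\circ s]=id_{FBL[\mathbb{R}^2]}$. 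Hence $FBL[s]$ is bounded below and is a lattice isomorphism onto a closed sublattice of $FBL[E]$. As a sublattice of a sublattice is again a sublattice, it is enough to exhibit a sublattice of $FBL[\mathbb{R}^2]$ failing the lattice-lifting property.

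The second step is to identify $FBL[\mathbb{R}^2]$, as a Banach lattice, with $C(\mathbb{T})$. Each element of $FBL[\mathbb{R}^2]$ is a positively homogeneous function on $(\mathbb{R}^2)^*$, hence determined by its restriction to the unit circle $\mathbb{T}\sub (\mathbb{R}^2)^*$; this restriction is an injective lattice homomorphism into $C(\mathbb{T})$. Evaluating the defining supremum at a single functional gives $\|f\|_\infty\leq\|f\|_{FBL[\mathbb{R}^2]}$ (supremum over $\mathbb{T}$). For the reverse bound I would integrate against a fixed full-support probability measure $\mu$ on $B_{\mathbb{R}^2}$: since $\phi\mapsto\int|\phi|\,d\mu$ is a norm on the finite dimensional space $(\mathbb{R}^2)^*$, it dominates $C^{-1}\|\cdot\|$, and for any admissible family $(x_i^*)$ one gets $C^{-1}\sum_i\|x_i^*\|\leq\int\sum_i|\langle x_i^*,x\rangle|\,d\mu\leq 1$, whence $\|f\|_{FBL[\mathbb{R}^2]}\leq C\|f\|_\infty$ using $|f(x_i^*)|\leq\|f\|_\infty\|x_i^*\|$. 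Thus the two norms are equivalent, so the restriction map is bounded below with dense range (the piecewise linear positively homogeneous functions generate the sublattice spanned by the $\delta_{\mathbb{R}^2}(x)$ and are $\|\cdot\|_\infty$-dense in $C(\mathbb{T})$ by the lattice form of the Stone--Weierstrass theorem), hence onto. Therefore $FBL[\mathbb{R}^2]$ is lattice isomorphic to $C(\mathbb{T})$.

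To conclude, recall from Example~\ref{example:torus} that $C(\mathbb{T})$ contains a closed sublattice (lattice isometric to $C(\mathbb{T}^{\nat})$) failing the lattice-lifting property. Whether a Banach lattice has the lattice-lifting property depends only on its lattice-isomorphism class, which is immediate from the equivalent formulation in Proposition~\ref{prop:EquivalencesLLP}(iii); hence the corresponding closed sublattice of $FBL[\mathbb{R}^2]$ also fails it, and transporting it through the embedding $FBL[s]$ yields the required sublattice of $FBL[E]$.

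The main obstacle is the second step, specifically the upper estimate $\|f\|_{FBL[\mathbb{R}^2]}\leq C\|f\|_\infty$ that upgrades the purely order-theoretic identification of $FBL[\mathbb{R}^2]$ with the continuous positively homogeneous functions on $(\mathbb{R}^2)^*$ to a Banach lattice isomorphism with $C(\mathbb{T})$. Once the two norms are known to be equivalent, closedness is unaffected and the remaining density and transport arguments are routine.
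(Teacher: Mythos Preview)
Your proposal is correct and follows the same three-step route as the paper's proof: embed $FBL[\mathbb{R}^2]$ as a closed sublattice of $FBL[E]$ via a complemented two-dimensional subspace, identify $FBL[\mathbb{R}^2]$ with $C(\mathbb{T})$, and invoke Example~\ref{example:torus}. The only difference is that the paper outsources the first two steps to \cite[proof of Corollary~2.8]{ART18} and \cite[Proposition~5.3]{dPW15}, while you supply self-contained arguments; your functoriality argument for the embedding and your integration estimate for the upper bound $\|f\|_{FBL[\mathbb{R}^2]}\leq C\|f\|_\infty$ (together with lattice Stone--Weierstrass for surjectivity) are both sound.
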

\begin{proof}
Let $E_0\sub E$ be a two-dimensional subspace. Since $E_0$ is complemented in~$E$, we know
that $FBL[E_0]$ is lattice isomorphic to a sublattice of $FBL[E]$ (see \cite[proof of Corollary~2.8]{ART18}).
But $FBL[E_0]$ is lattice isomorphic to $C(\mathbb T)$, see \cite[Proposition~5.3]{dPW15}. 
The conclusion follows from Example~\ref{example:torus}.
\end{proof}

For any Banach lattice~$X$, the set $Hom(X,\mathbb R)$ is $w^*$-closed in~$X^*$. Throughout the rest of this section
we discuss some topological properties of the $w^*$-compact set $Hom(X,\mathbb R) \cap B_{X^*}$ when $X$ has the isometric lifting-property.
We first introduce further terminology concerning homomorphisms on free Banach lattices.
As we already mentioned, given a Banach space~$E$, 
the set $Hom(FBL[E],\mathbb R)$ consists of all functionals $\varphi_{x^*}:FBL[E]\to \mathbb R$ defined by
$\varphi_{x^*}(f):=f(x^*)$ for every $f\in FBL[E]$, where $x^*\in E^*$. For any set $A \sub E^*$ we write 
$$
	\Phi_A:=\{\varphi_{x^*}:x^*\in A\} \sub Hom(FBL[E],\mathbb{R}).
$$
Clearly, the adjoint $\delta_E^* : FBL[E]^* \to E^*$ satisfies $\delta_E^*(\varphi_{x^*})=x^*$ for every $x^* \in E^*$ and its
restriction $\delta_E^* |_{Hom(FBL[E],\mathbb R)}: Hom(FBL[E],\mathbb R) \to E^*$ is a $w^*$-to-$w^*$ continuous bijection. In particular, we have:

\begin{rem}\label{RemNaturalHomomorphism}
Let $E$ be a Banach space and $K\sub E^*$ be a $w^*$-compact set. Then the map 
$x^* \mapsto \varphi_{x^*}$ defines a homeomorphism between $(K,w^*)$ and $(\Phi_K, w^*)$.
\end{rem}	

\begin{prop}\label{prop:Xlattice-lifting}
Let $X$ be a Banach lattice having the lattice-lifting property. Then $(\Phi_{Hom(X,\mathbb R)}, w^*)$ is a retract of $(Hom(FBL[X],\mathbb R), w^*)$. 
If in addition $X$ has the isometric lattice-lifting property, then $(Hom(X,\mathbb R) \cap B_{X^*},w^*)$ is a retract of $(B_{X^*},w^*)$.
\end{prop}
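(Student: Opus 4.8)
The plan is to build both retractions explicitly from a lattice homomorphism $\alpha\colon X\to FBL[X]$ witnessing the lattice-lifting property, so that $\beta_X\circ\alpha=id_X$, and to pass to adjoints. The one computational fact underpinning everything is that for $x^*\in Hom(X,\mathbb R)$ the functional $x^*\circ\beta_X$ is a \emph{lattice} homomorphism on $FBL[X]$ and coincides with $\varphi_{x^*}$. Indeed, both $\varphi_{x^*}$ and $x^*\circ\beta_X$ lie in $Hom(FBL[X],\mathbb R)$ and agree on the generators $\delta_X(x)$, since $\varphi_{x^*}(\delta_X(x))=\delta_X(x)(x^*)=x^*(x)=x^*(\beta_X(\delta_X(x)))$, and $FBL[X]$ is the sublattice they generate. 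Together with $\beta_X\circ\alpha=id_X$ this gives the key relation
\[
\varphi_{x^*}\circ\alpha = x^*\circ\beta_X\circ\alpha = x^* \qquad (x^*\in Hom(X,\mathbb R)).
\]

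For the first statement I would consider the lattice homomorphism $P:=\alpha\circ\beta_X\colon FBL[X]\to FBL[X]$ and set $r:=P^*|_{Hom(FBL[X],\mathbb R)}$. Since $P$ is a lattice homomorphism, $P^*$ carries $Hom(FBL[X],\mathbb R)$ into itself, and as an adjoint it is $w^*$-to-$w^*$ continuous. Given $\phi\in Hom(FBL[X],\mathbb R)$, the composition $y^*:=\phi\circ\alpha$ lies in $Hom(X,\mathbb R)$, whence $r(\phi)=\phi\circ\alpha\circ\beta_X=y^*\circ\beta_X=\varphi_{y^*}\in\Phi_{Hom(X,\mathbb R)}$; thus $r$ maps into $\Phi_{Hom(X,\mathbb R)}$. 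Finally, for $x^*\in Hom(X,\mathbb R)$ the key relation yields $r(\varphi_{x^*})=\varphi_{x^*}\circ\alpha\circ\beta_X=x^*\circ\beta_X=\varphi_{x^*}$, so $r$ is the identity on $\Phi_{Hom(X,\mathbb R)}$. Hence $r$ is the desired retraction, and no compactness is needed because everything stays inside $Hom(FBL[X],\mathbb R)$.

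For the second statement I would take $\alpha$ with $\|\alpha\|=1$ and work directly on $B_{X^*}$ through $\rho\colon B_{X^*}\to X^*$ defined by $\rho(x^*):=\alpha^*(\varphi_{x^*})=\varphi_{x^*}\circ\alpha$. Three verifications are in order. First, $\rho(x^*)\in Hom(X,\mathbb R)$ as a composition of lattice homomorphisms. Second, $\rho(x^*)\in B_{X^*}$: from the norm formula for $FBL[X]$, applied to the single functional $x^*$, one obtains $\|\varphi_{x^*}\|\le\|x^*\|$, so $\|\rho(x^*)\|\le\|\alpha^*\|\,\|\varphi_{x^*}\|\le\|x^*\|\le 1$. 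Third, continuity: by Remark~\ref{RemNaturalHomomorphism} applied to the $w^*$-compact set $K=B_{X^*}$, the map $x^*\mapsto\varphi_{x^*}$ is $w^*$-to-$w^*$ continuous on $B_{X^*}$, and composing with the $w^*$-to-$w^*$ continuous adjoint $\alpha^*$ shows $\rho$ is $w^*$-to-$w^*$ continuous. The key relation then gives $\rho(x^*)=x^*$ for every $x^*\in Hom(X,\mathbb R)\cap B_{X^*}$, so $\rho$ retracts $(B_{X^*},w^*)$ onto $(Hom(X,\mathbb R)\cap B_{X^*},w^*)$.

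The genuinely non-formal points concern the second statement, and I expect them to be the main obstacle: leaving $Hom(FBL[X],\mathbb R)$ for the full dual ball $B_{X^*}$ forces one to secure continuity of $x^*\mapsto\varphi_{x^*}$ by invoking the $w^*$-compactness of $B_{X^*}$ via Remark~\ref{RemNaturalHomomorphism}, and to use the isometric hypothesis $\|\alpha\|=1$ together with the bound $\|\varphi_{x^*}\|\le\|x^*\|$ in order to keep $\rho$ inside the unit ball. Both ingredients are unavailable (or unnecessary) in the first statement, which is why only there one gets away with the purely adjoint-theoretic argument.
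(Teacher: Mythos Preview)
Your proof is correct and follows essentially the same approach as the paper: both use the adjoint $\beta_X^*\circ\alpha^*=(\alpha\circ\beta_X)^*$ as the retraction, relying on the identity $x^*\circ\beta_X=\varphi_{x^*}$ for $x^*\in Hom(X,\mathbb R)$; the only cosmetic difference is that for the second statement the paper builds the retraction on $\Phi_{B_{X^*}}$ and then transfers it to $B_{X^*}$ via Remark~\ref{RemNaturalHomomorphism}, whereas you conjugate by the homeomorphism from the outset and work directly on $B_{X^*}$.
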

\begin{proof}
Let $T:X \to FBL[X]$ be a lattice homomorphism such that $\beta_X \circ T= id_X$. Note that for each $x^* \in Hom(X,\mathbb R)$ we have 
\begin{equation}\label{eqn:betax}
	x^*\circ \beta_X  = \varphi_{x^*} 
\end{equation}
(both are lattice homomorphisms and $x^* \circ \beta_X \circ \delta_X = x^* = \varphi_{x^*}\circ \delta_X$)
and therefore 
\begin{equation}\label{eqn:betaxx}
	(\beta_X^* \circ T^*)(\varphi_{x^*})=
	\beta_X^*(\varphi_{x^*}\circ T) = \beta_X^*(x^*\circ \beta_X \circ T)=\beta_X^*(x^*)=\varphi_{x^*}.
\end{equation}
For any $y^*\in X^*$ we have $(\beta_X^* \circ T^*)(\varphi_{y^*}) \in \Phi_{Hom(X,\mathbb R)}$, because
$T^*(\varphi_{y^*}) \in Hom(X,\mathbb R)$ and \eqref{eqn:betax} yields
\begin{equation}\label{eqn:betaxxx}
	(\beta_X^* \circ T^*)(\varphi_{y^*}) = \varphi_{T^*(\varphi_{y^*})} .
\end{equation}
So, the restriction of~$\beta_X^*\circ T^*$ is a retraction from $(Hom(FBL[X],\mathbb R),w^*)$ 
onto~$(\Phi_{Hom(X,\mathbb R)},w^*)$.

Suppose now that $X$ has the isometric lattice-lifting property, so that $T$ can be chosen in such a way that $\|T\|=1$. 
For each $y^*\in B_{X^*}$ we have $\varphi_{y^*}\in B_{FBL[X]^*}$, hence $T^*(\varphi_{y^*}) \in Hom(X,\mathbb R)\cap B_{X^*}$ 
and so $(\beta_X^* \circ T^*)(\varphi_{y^*})$ belongs to~$\Phi_{Hom(X,\mathbb R) \cap B_{X^*}}$ (by~\eqref{eqn:betaxxx}).
Thus, the restriction of~$\beta_X^*\circ T^*$ is a retraction from $(\Phi_{B_{X^*}},w^*)$ 
onto~$(\Phi_{Hom(X,\mathbb R)\cap B_{X^*}},w^*)$. 
From Remark~\ref{RemNaturalHomomorphism} it follows that $(Hom(X,\mathbb R) \cap B_{X^*},w^*)$ is a retract of~$(B_{X^*},w^*)$.
\end{proof}

A topological space $T$ is said to be {\em locally pathwise connected} if for each $t\in T$ and each open set~$U$ containing~$t$ 
there is an open set~$V$ containing~$t$ such that for every $t'\in V$ there is a continuous map $h:[0,1]\to U$ such that $h(0)=t$
and $h(1)=t'$. This property is inherited by retracts (see, e.g., \cite[Proposition~10.2]{Hu})
and holds for any convex subset of a locally convex space. So, from the previous proposition we get:

\begin{cor}\label{cor:locallypathconnected}
Let $X$ be a Banach lattice. Then $(Hom(X,\mathbb R)\cap B_{X^*},w^*)$ is locally pathwise connected
whenever $X$ has the isometric lattice-lifting property.
\end{cor}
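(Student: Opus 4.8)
The plan is to deduce this directly from Proposition~\ref{prop:Xlattice-lifting} together with the two stability facts recalled just above the statement. First I would note that the isometric lattice-lifting property places us squarely in the hypothesis of the second part of Proposition~\ref{prop:Xlattice-lifting}, which yields that $(Hom(X,\mathbb R)\cap B_{X^*},w^*)$ is a retract of $(B_{X^*},w^*)$. Thus the whole argument reduces to two points: that $(B_{X^*},w^*)$ is itself locally pathwise connected, and that this property passes to retracts.

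For the first point, I would argue that $(B_{X^*},w^*)$ is a convex subset of the locally convex space $(X^*,w^*)$, and that every such subset is locally pathwise connected. Indeed, given $t\in B_{X^*}$ and a $w^*$-open set $U\ni t$, local convexity provides a convex $w^*$-open neighborhood $W$ of~$t$ with $W\cap B_{X^*}\sub U$; setting $V:=W\cap B_{X^*}$, for any $t'\in V$ the segment joining $t$ to~$t'$ lies in $W\cap B_{X^*}\sub U$, since both $W$ and $B_{X^*}$ are convex and contain $t$ and~$t'$. This segment is the required path in~$U$.

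For the second point, I would invoke the fact that local pathwise connectedness is inherited by retracts, see \cite[Proposition~10.2]{Hu}, applied to the $w^*$-continuous retraction furnished by Proposition~\ref{prop:Xlattice-lifting}. Combining the two observations gives the conclusion.

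There is no serious obstacle here: all of the substantive work has already been carried out in Proposition~\ref{prop:Xlattice-lifting}, where the isometric lattice-lifting property was converted into an explicit $w^*$-continuous retraction of $(B_{X^*},w^*)$ onto $(Hom(X,\mathbb R)\cap B_{X^*},w^*)$. The present statement is then a purely formal consequence once one records that the ambient ball, being a convex subset of the locally convex space $(X^*,w^*)$, is locally pathwise connected, and that this is a retract-stable property. The only point demanding a modicum of care is verifying the \emph{local} (rather than merely global) pathwise connectedness of the ball, which is exactly where local convexity of the $w^*$-topology is used, as indicated above.
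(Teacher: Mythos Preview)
Your proposal is correct and follows precisely the paper's own approach: apply Proposition~\ref{prop:Xlattice-lifting} to get that $(Hom(X,\mathbb R)\cap B_{X^*},w^*)$ is a retract of $(B_{X^*},w^*)$, observe that the latter is locally pathwise connected as a convex subset of a locally convex space, and conclude by the retract-stability of this property via \cite[Proposition~10.2]{Hu}. The only difference is that you spell out the local-convexity argument in detail, whereas the paper merely states it.
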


Another application of Proposition~\ref{prop:Xlattice-lifting} is the following:

\begin{cor}\label{cor:AR}
Let $X$ be a separable Banach lattice. Then $(Hom(X,\mathbb R)\cap B_{X^*},w^*)$ is an AR
whenever $X$ has the isometric lattice-lifting property.
\end{cor}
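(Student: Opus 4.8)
The plan is to read the result off from Proposition~\ref{prop:Xlattice-lifting}, which under the isometric lattice-lifting hypothesis already identifies $(Hom(X,\mathbb R)\cap B_{X^*},w^*)$ as a retract of $(B_{X^*},w^*)$. What then remains is to observe that $(B_{X^*},w^*)$ is itself an AR and that the class of ARs is closed under retracts. So, first I would record that $(Hom(X,\mathbb R)\cap B_{X^*},w^*)$ is a compact metrizable space: $Hom(X,\mathbb R)$ is $w^*$-closed in~$X^*$, the ball $B_{X^*}$ is $w^*$-compact, and separability of~$X$ makes $(B_{X^*},w^*)$ metrizable. This step is needed to ensure that the compact notion of AR from Fact~\ref{fact:ARandANR} is the appropriate one to work with.

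Next I would argue that $(B_{X^*},w^*)$ is an AR, exactly as in the proof of Theorem~\ref{thm:CK}. Being a compact convex metrizable subset of the locally convex space $(X^*,w^*)$, it is homeomorphic to the Hilbert cube $[0,1]^{\nat}$ by Keller's theorem when $X$ is infinite-dimensional, and to a compact convex subset of some $\mathbb R^n$ when $X$ is finite-dimensional; in both cases Fact~\ref{fact:ARandANR} shows that $(B_{X^*},w^*)$ is an AR.

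Finally I would invoke Proposition~\ref{prop:Xlattice-lifting} to obtain a retraction $s$ of $(B_{X^*},w^*)$ onto $(Hom(X,\mathbb R)\cap B_{X^*},w^*)$, and combine it with a retraction of $[0,1]^{\nat}$ onto a homeomorphic copy of $(B_{X^*},w^*)$ supplied by Fact~\ref{fact:ARandANR}. Composing these two retractions (through the homeomorphism) exhibits $(Hom(X,\mathbb R)\cap B_{X^*},w^*)$ as a retract of $[0,1]^{\nat}$, which is an AR by Fact~\ref{fact:ARandANR} once more. I do not expect a genuine obstacle here, since all the substantive content is already isolated in Proposition~\ref{prop:Xlattice-lifting}; the only care required is bookkeeping: checking compactness and metrizability so that the compact definition of AR applies, treating the finite-dimensional case separately (where Keller's theorem is unavailable), and making explicit the standard fact that a retract of an AR is an AR by composing the two retractions through the cube.
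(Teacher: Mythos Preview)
Your proposal is correct and follows essentially the same route as the paper: identify $(B_{X^*},w^*)$ with a cube via Keller's theorem (or the elementary finite-dimensional case), invoke Proposition~\ref{prop:Xlattice-lifting} to get the retraction onto $Hom(X,\mathbb R)\cap B_{X^*}$, and conclude via Fact~\ref{fact:ARandANR}. The paper is terser---it directly says $(B_{X^*},w^*)$ is homeomorphic to $[0,1]^n$ or $[0,1]^{\nat}$ and then cites Fact~\ref{fact:ARandANR} and Proposition~\ref{prop:Xlattice-lifting} in one breath---but your added bookkeeping (compactness, metrizability, composing the two retractions explicitly) is just making the same argument more explicit.
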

\begin{proof} Note that $(B_{X^*},w^*)$ is homeomorphic to either $[0,1]^n$ for some $n\in \mathbb N$ (if $X$ is finite-dimensional) 
or $[0,1]^\mathbb{N}$ (if $X$ is infinite-dimensional) by Keller's theorem (see, e.g., \cite[Theorem~12.37]{fab-ultimo}). 
By Fact~\ref{fact:ARandANR} and Proposition~\ref{prop:Xlattice-lifting}, $(Hom(X,\mathbb R) \cap B_{X^*},w^*)$ is an AR.  
\end{proof}

We wonder whether the separability assumption can be removed from the previous statement:

\begin{prob}
Let $X$ be a Banach lattice. Is $(Hom(X,\mathbb R)\cap B_{X^*},w^*)$ an AR whenever $X$ has the isometric lattice-lifting property?
\end{prob}

\section{Banach spaces with a $1$-unconditional basis}\label{section:unconditional}

A Banach space with a $1$-unconditional basis becomes naturally a Banach lattice when equipped with the coordinatewise order.
The purpose of this section is to prove the following: 

\begin{thm}\label{t:unconditional}
Let $X$ be a Banach space with a $1$-unconditional basis. If we consider $X$ as a Banach lattice with the coordinatewise order, then $X$ has the isometric lattice-lifting property. 
\end{thm}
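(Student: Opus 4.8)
The plan is to construct the required norm-one lattice homomorphism $T\colon X\to FBL[X]$ explicitly, guided by the rigidity that the lattice structure forces on any such lift. Write $(e_n)$ for the $1$-unconditional basis, $(e_n^*)$ for the biorthogonal functionals, and put $u_n:=\delta_X(e_n)\in FBL[X]$, so $u_n$ is the evaluation $x^*\mapsto x^*(e_n)$. Since the order on $X$ is coordinatewise, each $e_n^*$ is a lattice homomorphism $X\to\mathbb R$, and arguing as in \eqref{eqn:betax} gives $e_n^*\circ\beta_X=\varphi_{e_n^*}$, that is $e_n^*(\beta_X(f))=f(e_n^*)$ for all $f\in FBL[X]$. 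As $(e_n^*)$ separates points, the lifting condition $\beta_X\circ T=id_X$ is thus equivalent to the normalization $T(x)(e_n^*)=e_n^*(x)$ for all $x,n$. Moreover, for any lattice homomorphism $T$ and any $x^*\in X^*$ the evaluation $f\mapsto f(x^*)$ is the lattice homomorphism $\varphi_{x^*}$, so $x\mapsto T(x)(x^*)$ is a lattice homomorphism $X\to\mathbb R$, necessarily positive; since the basis vectors are pairwise disjoint, such a homomorphism is a nonnegative multiple of a single $e_n^*$. Hence $T$ is automatically determined by the disjoint positive family $g_n:=T(e_n)$ via $T(x)=\sum_n e_n^*(x)\,g_n$, and the task reduces to producing a disjoint family $(g_n)$ in $FBL[X]_+$ with $g_n(e_m^*)=\delta_{nm}$ for which $\norm{T}\le1$.

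The canonical such family is given by the ``gap'' functions
\[
 g_n(x^*):=\Big(x^*(e_n)-\sup_{m\ne n}\abs{x^*(e_m)}\Big)^{+},\qquad x^*\in X^*,
\]
and I set $T(x):=\sum_n e_n^*(x)\,g_n$. The elementary verifications are then: the $g_n$ are pairwise disjoint (at each $x^*$ only the strictly dominant positive coordinate survives); $g_n(e_m^*)=\delta_{nm}$, which yields $\beta_X\circ T=id_X$ by the first reduction; $0\le g_n\le u_n^{+}$ pointwise, so each $g_n$ has finite $FBL[X]$-norm; and, as everything is defined by pointwise operations, $T$ is linear and preserves the lattice operations, hence is a lattice homomorphism once it is shown to map into $FBL[X]$. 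The lower bound $\norm{T(x)}\ge\norm{\beta_X(T(x))}=\norm{x}$ is automatic from $\norm{\beta_X}=1$, so only $\norm{T}\le1$ and membership in $FBL[X]$ remain.

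These two remaining points are the technical heart, and I would treat them together by reducing to a single inequality in $X^*$. Using $\abs{T(x)}=T(\abs x)$ one may assume $x\ge0$; the norm formula for $FBL[X]$ then gives $\norm{T(x)}_{FBL[X]}=\sup\sum_i e_{n_i}^*(x)\,g_{n_i}(x_i^*)$, the supremum being over finite \emph{admissible} families $(x_i^*)$, where $n_i$ is the coordinate selected by $x_i^*$ and admissibility means $\sup_{y\in B_X}\sum_i\abs{x_i^*(y)}\le1$. Rewriting this sum as $\big(\sum_i g_{n_i}(x_i^*)\,e_{n_i}^*\big)(x)$, the bound $\norm{T}\le1$ follows once one proves that
\[
 \Big\|\sum_i g_{n_i}(x_i^*)\,e_{n_i}^*\Big\|_{X^*}\le 1
 \qquad\text{for every admissible family } (x_i^*).
\]
Since admissibility is exactly the condition $\norm{\sum_i\eta_i x_i^*}_{X^*}\le1$ for all choices of signs $\eta_i=\pm1$, this compares a positive combination of the \emph{selected} coordinates with the sign-sums of the $x_i^*$.

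The proof of this inequality is the delicate step I expect to be the main obstacle: it must genuinely use the supremum over sign patterns (a single sign choice does not suffice), and I would attack it through the $1$-unconditionality of $(e_n^*)$ in $X^*$ together with a careful, configuration-dependent choice of signs $\eta_i$ (equivalently, an extreme-point or averaging argument). The same estimates also yield membership in $FBL[X]$: approximating $g_n$ by the finite lattice expressions $\big(u_n-\bigvee_{m\le N,\,m\ne n}\abs{u_m}\big)^{+}\in FBL[X]$ and bounding the errors in $FBL[X]$-norm shows $g_n\in FBL[X]$, after which disjointness plus the uniform bound gives convergence of $\sum_n e_n^*(x)\,g_n$ in $FBL[X]$. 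Combining the upper and lower norm bounds gives $\norm{T}=1$, which is the isometric lattice-lifting property.
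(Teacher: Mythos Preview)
Your overall architecture matches the paper's: reduce to building a disjoint sequence $(g_n)$ in $FBL[X]_+$ with $g_n(e_m^*)=\delta_{nm}$ and the right norm control, then define $T(x)=\sum_n e_n^*(x)g_n$. The key norm inequality you isolate is exactly the paper's Lemma~\ref{lem:1unconditionalbasis}. However, your specific choice of $g_n$ is fatally flawed: the ``gap'' functions
\[
 g_n(x^*)=\Big(x^*(e_n)-\sup_{m\ne n}|x^*(e_m)|\Big)^{+}
\]
are \emph{not} elements of $FBL[X]$ in general. Every element of $FBL[X]$ is $w^*$-continuous on $B_{X^*}$ (since $FBL[X]$-norm convergence dominates uniform convergence on $B_{X^*}$ and the generators $\delta_X(x)$ are $w^*$-continuous), but your $g_n$ fails this. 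Concretely, take $X=c_0$: the sequence $x_k^*:=e_1^*+\tfrac12 e_k^*\in B_{\ell_1}$ converges $w^*$ to $e_1^*$, yet $g_1(x_k^*)=\tfrac12$ for all $k\ge2$ while $g_1(e_1^*)=1$. Hence $g_1\notin FBL[c_0]$. The same example shows your proposed approximation cannot work: with $g_{1,N}:=\big(u_1-\bigvee_{2\le m\le N}|u_m|\big)^+$ one has $(g_{1,N}-g_1)(x_{N+1}^*)=\tfrac12$ for every $N$, so $\|g_{1,N}-g_1\|_{FBL[c_0]}\not\to0$. The bound $0\le g_n\le u_n^+$ only places $g_n$ in the ambient lattice $H_0[X]$, not in its closed sublattice $FBL[X]$.

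This is exactly the obstacle the paper's construction is designed to overcome. Instead of subtracting the unweighted supremum of the other coordinates, the paper introduces rapidly growing weights $N_n$ (for coordinates $m<n$) and damping factors $g_m(|x^*(e_m)|/|x^*(e_n)|)$ with summable thresholds $1/M_m$ (for coordinates $m>n$). The summability $\sum 1/M_m<\infty$ is what makes the finite-coordinate truncations $h_{n,k}$ converge in $FBL[X]$-norm (Step~3, inequality~\eqref{eqn:freenorm}); there is no analogue of this in your setup because your tail error is of order $\sup_{m>N}|x^*(e_m)|$, which carries no decay. So the missing idea is not the norm estimate (which you correctly flag and which is Lemma~\ref{lem:1unconditionalbasis}) but the delicate design of the lift to force membership in $FBL[X]$.
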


The proof of Theorem~\ref{t:unconditional} requires some auxiliary lemmata. Recall
that the free Banach lattice $FBL(A)$ over a non-empty set $A$ can be identified with $FBL[\ell_1(A)]$, see \cite[Corollary~2.9]{ART18}.
For each $a\in A$, we denote by $e_a \in \ell_1(A)$ the vector defined by $e_a(a')=0$ for all $a'\in A\setminus \{a\}$ and $e_a(a)=1$,
and we write $\delta_a:=\delta_{\ell_1(A)}(e_a) \in FBL(A)$ for simplicity.

\begin{lem}\label{lem:rojo}
Let $E$ be a Banach space. The vector lattice homomorphism
$$
	\varphi: FBL(B_E) \to \mathbb{R}^{E^*},
	\quad
	\varphi(f)(x^*):=f(x^*|_{B_E}),
$$ 
satisfies the following properties:
\begin{enumerate}
\item[(i)] $\varphi(FBL(B_E)) \sub FBL[E]$.
\item[(ii)] $\varphi$ has norm~$1$ and dense range as a bounded linear operator from $FBL(B_E)$ to $FBL[E]$.
\end{enumerate}
\end{lem}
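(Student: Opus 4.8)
We are given the vector lattice homomorphism $\varphi: FBL(B_E) \to \mathbb{R}^{E^*}$ defined on generators by $\varphi(\delta_x)(x^*) = x^*(x)$ for $x\in B_E$, extended as a lattice homomorphism. Recall $FBL(B_E) = FBL[\ell_1(B_E)]$, so elements of $FBL(B_E)$ are positively homogeneous functions on $\ell_1(B_E)^* = \ell_\infty(B_E)$, and $\varphi$ precomposes with the restriction-like map $x^* \mapsto x^*|_{B_E} \in \ell_\infty(B_E)$. The plan is to verify (i) and (ii) by reducing everything to the explicit formula for $\|\cdot\|_{FBL[E]}$ quoted in the introduction, together with the universal property.

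**Proving (i) via the universal property.**
First I would identify $\varphi$ cleanly. The map $j: E \to \ell_1(B_E)$ is not available, but what \emph{is} available is the canonical operator going the other way: consider the bounded linear operator $q: \ell_1(B_E) \to E$ sending $e_x \mapsto x$ for $x\in B_E$; since $\|q(e_x)\|_E = \|x\| \le 1$, we have $\|q\|\le 1$, and in fact $q$ is a quotient map onto (a dense subspace generating) $E$. Composing $q$ with $\delta_E: E \to FBL[E]$ gives a bounded linear operator $\delta_E\circ q: \ell_1(B_E) \to FBL[E]$ of norm $\le 1$, and by the universal property of $FBL[\ell_1(B_E)] = FBL(B_E)$ there is a unique lattice homomorphism $\widehat{\delta_E \circ q}: FBL(B_E) \to FBL[E]$ with $\|\widehat{\delta_E \circ q}\| \le 1$. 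I would then check that this lattice homomorphism agrees with $\varphi$ on the generators $\delta_x$: both send $\delta_x$ to the evaluation $x^* \mapsto x^*(x)$, i.e. to $\delta_E(x) \in FBL[E] \subseteq \mathbb{R}^{E^*}$. Since both are lattice homomorphisms agreeing on the generating set $\{\delta_x : x\in B_E\}$, they coincide. This immediately gives (i), namely $\varphi(FBL(B_E)) \subseteq FBL[E]$, and simultaneously shows $\varphi$ has norm $\le 1$ as an operator into $FBL[E]$.

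**Proving (ii): norm exactly $1$ and dense range.**
For the norm being exactly $1$, it suffices to test on a single generator: $\|\delta_x\|_{FBL(B_E)} = \|x\|_{\ell_1(B_E)} = 1$ for any $x\in B_E$ with $\|x\|=1$ (using that $\delta_{\ell_1(B_E)}$ is an isometry), while $\|\varphi(\delta_x)\|_{FBL[E]} = \|\delta_E(x)\|_{FBL[E]} = \|x\|_E = 1$ (using that $\delta_E$ is an isometry). Hence $\|\varphi\| = 1$. For the dense range, the key observation is that $FBL[E]$ is by definition the closed sublattice of $H_0[E]$ generated by $\{\delta_E(x): x\in E\}$, i.e. the closed sublattice generated by $\{\delta_E(x): x\in B_E\}$ after rescaling (positive homogeneity lets us reach all of $E$ from $B_E$ by scalar multiples, and lattice homomorphisms commute with scalars). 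Since $\varphi$ is a lattice homomorphism whose range contains every $\delta_E(x)$ for $x\in B_E$, the range $\varphi(FBL(B_E))$ is a sublattice of $FBL[E]$ containing this generating set; therefore its closure is all of $FBL[E]$, giving dense range.

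**Anticipated obstacle.**
The one point requiring genuine care, rather than formal nonsense, is checking that $\varphi$ as defined by the pointwise formula really does land inside $H_0[E]$ (finite $FBL[E]$-norm) and coincides with the lattice homomorphism produced by the universal property; the subtlety is that the universal property manufactures an abstract extension, and one must confirm its concrete action on a general element matches the stated pointwise prescription $\varphi(f)(x^*) = f(x^*|_{B_E})$. I expect this to follow by noting that both sides are lattice homomorphisms in $f$ and agree on generators, so the identification of the pointwise formula with the abstract map is where the verification should be spelled out; everything else is a routine consequence of the isometry of $\delta_E$, the isometry of $\delta_{\ell_1(B_E)}$, and the definition of $FBL[E]$ as a generated closed sublattice.
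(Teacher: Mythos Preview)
Your proof is correct and takes a genuinely different route from the paper's. The paper argues directly from the explicit norm formula: for $f\in FBL(B_E)$ and $x_1^*,\dots,x_n^*\in E^*$ with $\sup_{x\in B_E}\sum_k|x_k^*(x)|\le 1$, it computes $\sum_k|\varphi(f)(x_k^*)|=\sum_k|f(x_k^*|_{B_E})|\le\|f\|_{FBL(B_E)}$, establishing that $\varphi$ lands in $H_0[E]$ with $\|\varphi\|\le 1$; it then observes that $\varphi$ sends the algebraic sublattice generated by $\{\delta_x:x\in B_E\}$ onto the algebraic sublattice generated by $\{\delta_E(x):x\in B_E\}$, which is dense in $FBL[E]$, and concludes by continuity. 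Your argument instead bypasses the norm computation entirely by invoking the universal property applied to $\delta_E\circ q$, where $q:\ell_1(B_E)\to E$ is the canonical contraction, obtaining a lattice homomorphism into $FBL[E]$ of norm at most~$1$ for free, and then identifying it with the pointwise formula.

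What each approach buys: the paper's direct estimate is self-contained and makes the inequality $\|\varphi(f)\|_{FBL[E]}\le\|f\|_{FBL(B_E)}$ completely transparent, at the cost of a small computation. Your approach is more conceptual and explains \emph{why} the map exists (it is the functorial image of $q$), but the identification step you flag as the ``anticipated obstacle'' does need one extra sentence: since $\mathbb{R}^{E^*}$ is not a Banach lattice, uniqueness in the universal property does not apply verbatim, and you should argue that both $\varphi$ and $\widehat{\delta_E\circ q}$ are continuous from $FBL(B_E)$ into $\mathbb{R}^{E^*}$ with the product topology (each coordinate $f\mapsto f(x^*|_{B_E})$ being a bounded functional), so agreement on the dense algebraic sublattice forces agreement everywhere. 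You essentially anticipated this, so the proof is complete once that remark is made explicit.
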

\begin{proof}
Fix $f\in FBL(B_E)$. Clearly, $\varphi(f)$ is positively homogeneous. Given finitely many $x_1^*,\dots,x_n^*\in E^*$
with $\sup_{x\in B_E}\sum_{k=1}^n |x_k^*(x)|\leq 1$, we have
$$
	\sum_{k=1}^n |\varphi(f)(x_k^*)|=
	\sum_{k=1}^n |f(x_k^*|_{B_E})| \leq \|f\|_{FBL(B_E)},
$$ 
so $\varphi(f)\in H_0[E]$ and $\|\varphi(f)\|_{FBL[E]}\leq \|f\|_{FBL(B_E)}$. 
Therefore, $\varphi$ is a lattice homomorphism from $FBL(B_E)$ to $H_0[E]$
with $\|\varphi\|\leq 1$.

Let $S$ be the vector sublattice
of $FBL(B_E)$ generated by $\{\delta_{x}:x\in B_E\}$. Since $\varphi(\delta_{x})=\delta_E(x)$ for every $x\in B_E$
(which implies that $\|\varphi\|=1$) and $\varphi$ is a lattice homomorphism, 
$\varphi(S)$ is the vector sublattice of $H_0[E]$ generated by $\{\delta_E(x):x\in B_E\}$. Hence $\varphi(S)$ is a dense vector sublattice 
of $FBL[E]$. Since $S$ is dense in $FBL(B_E)$ and $\varphi$ is continuous, we conclude that
$\varphi(FBL(B_E))$ is a dense vector sublattice of~$FBL[E]$.
\end{proof}

\begin{defn}\label{defn:depending}
Let $E$ be a Banach space, $B \sub E$ be a non-empty set and $f: E^*\to \mathbb{R}$ be a function.  
We say that $f$ {\em depends on coordinates of~$B$} if $f(x^*)=f(y^*)$ for every $x^*,y^*\in E^*$ such that $x^*|_{B}=y^*|_{B}$.
\end{defn}

\begin{lem}\label{lem:PWgeneral}
Let $E$ be a Banach space and $f:E^*\to \mathbb{R}$ be a positively homogeneous function such that:
\begin{enumerate}
\item[(i)] $f|_{B_{E^*}}$ is norm-continuous;
\item[(ii)] there is a finite non-empty set $B \sub E$ such that $f$ depends on coordinates of~$B$. 
\end{enumerate}
Then $f\in FBL[E]$.
\end{lem}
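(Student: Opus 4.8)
The plan is to reduce the statement to the finite-dimensional case and there combine a norm estimate with a Stone--Weierstrass argument.

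First I would set $F:=\mathrm{span}(B)$, a finite-dimensional subspace of $E$, and let $r:=i^*:E^*\to F^*$ be the adjoint of the inclusion $i:F\to E$, i.e.\ the restriction map $r(x^*)=x^*|_F$. Since $B$ spans $F$, condition~(ii) says precisely that $f$ factors through $r$: there is a unique positively homogeneous $\tilde f:F^*\to\mathbb{R}$ with $f=\tilde f\circ r$, well defined because $r$ maps $E^*$ onto $F^*$ (Hahn--Banach), and positively homogeneous since $r(\lambda x^*)=\lambda r(x^*)$. As $i$ is an isometric embedding, $r$ maps the open unit ball of $E^*$ onto that of $F^*$ and is an open continuous surjection there; hence $f=\tilde f\circ r$ being norm-continuous on $B_{E^*}$ forces $\tilde f$ to be norm-continuous on the open ball of $F^*$, and then positive homogeneity upgrades this to continuity of $\tilde f$ on all of $F^*$. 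By the universal property of free Banach lattices \cite{ART18}, $i$ induces a lattice homomorphism $\hat i:FBL[F]\to FBL[E]$ with $\hat i\circ\delta_F=\delta_E\circ i$; checking on the generators $\delta_F(y)$ and extending by density and continuity, $\hat i$ acts as composition with the adjoint, $\hat i(g)=g\circ r$ for every $g\in FBL[F]$. Consequently $f=\tilde f\circ r=\hat i(\tilde f)$, so everything reduces to showing that every continuous positively homogeneous function on $F^*$ belongs to $FBL[F]$.

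The heart of the proof, and the step I expect to be the main obstacle, is the following finite-dimensional norm estimate: for positively homogeneous $h:F^*\to\mathbb{R}$,
\[
\sup_{u^*\in S_{F^*}}|h(u^*)|\ \le\ \|h\|_{FBL[F]}\ \le\ C\,\sup_{u^*\in S_{F^*}}|h(u^*)|,
\]
where $C=C_F$ depends only on $F$. The left inequality is the one-term case of the definition of $\|\cdot\|_{FBL[F]}$. For the right one I would fix any Euclidean structure on $F$ and let $\nu$ be the normalized surface measure on the unit sphere $S_F$; since the support of $\nu$ is not contained in any hyperplane, $c:=\inf_{u^*\in S_{F^*}}\int_{S_F}|u^*(x)|\,d\nu(x)>0$ by compactness of $S_{F^*}$. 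Then, given $x_1^*,\dots,x_n^*\in F^*$ with $\sup_{x\in B_F}\sum_i|x_i^*(x)|\le 1$, positive homogeneity yields $|h(x_i^*)|\le \|x_i^*\|\,\sup_{S_{F^*}}|h|$, while
\[
\sum_{i}\|x_i^*\|\ \le\ \frac1c\sum_i\int_{S_F}|x_i^*(x)|\,d\nu(x)\ =\ \frac1c\int_{S_F}\sum_i|x_i^*(x)|\,d\nu(x)\ \le\ \frac1c,
\]
giving $\sum_i|h(x_i^*)|\le \frac1c\sup_{S_{F^*}}|h|$ and hence the claim with $C=1/c$. The point is that in finite dimensions the defect between $\sup_x\sum_i|x_i^*(x)|$ and $\sum_i\sup_x|x_i^*(x)|=\sum_i\|x_i^*\|$ is controlled, which is exactly what fails in infinite dimensions.

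Finally I would run a lattice Stone--Weierstrass argument on the compact sphere $S_{F^*}$. Let $\mathcal L\sub FBL[F]$ be the vector sublattice generated by $\{\delta_F(y):y\in F\}$, so that $FBL[F]=\overline{\mathcal L}$; every $g\in\mathcal L$ is continuous and positively homogeneous, so $\{g|_{S_{F^*}}:g\in\mathcal L\}$ is a vector sublattice of $C(S_{F^*})$. It satisfies two-point interpolation: given distinct $u^*,v^*\in S_{F^*}$ and $a,b\in\mathbb{R}$, if $u^*,v^*$ are linearly independent then some $\delta_F(y)$ already realizes the prescribed values, while the only dependent case is $v^*=-u^*$, handled by choosing $y$ with $u^*(y)=1$ and using $a\,(\delta_F(y)\vee 0)+b\,((-\delta_F(y))\vee 0)\in\mathcal L$. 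By the lattice form of the Stone--Weierstrass theorem, $\{g|_{S_{F^*}}:g\in\mathcal L\}$ is dense in $C(S_{F^*})$. Thus $\tilde f|_{S_{F^*}}$ is a uniform limit of some $g_n|_{S_{F^*}}$ with $g_n\in\mathcal L$; since $g_n-\tilde f$ is positively homogeneous, the norm estimate gives $\|g_n-\tilde f\|_{FBL[F]}\le C\sup_{S_{F^*}}|g_n-\tilde f|\to 0$, whence $\tilde f\in\overline{\mathcal L}=FBL[F]$ and therefore $f=\hat i(\tilde f)\in FBL[E]$, completing the proof.
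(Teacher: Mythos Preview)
Your proof is correct and follows the same skeleton as the paper's---reduce to a finite-dimensional situation, prove that there the $FBL$-norm is equivalent to the sup norm on the dual sphere, and run a lattice Stone--Weierstrass argument---but the execution differs. The paper routes through $FBL(B_E)=FBL[\ell_1(B_E)]$ via Lemma~\ref{lem:rojo}: it defines an auxiliary function $g$ on $\ell_\infty(B_E)$ depending on finitely many coordinates, proves $g\in FBL(B_E)$ by a separate lemma (Lemma~\ref{lem:PW}), and then pushes $g$ forward to $FBL[E]$ by the map $\varphi$ of Lemma~\ref{lem:rojo}; the norm estimate in Lemma~\ref{lem:PW} is obtained by an $\ell_\infty$-coordinate selection (for each $x_i^*$ pick $a_i\in A_0$ realizing $\|r(x_i^*)\|_\infty$). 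You instead work directly with the finite-dimensional subspace $F=\mathrm{span}(B)$, use the induced lattice homomorphism $\hat i:FBL[F]\to FBL[E]$ (acting as composition with the restriction $r=i^*$), and derive the norm estimate by averaging against the surface measure on~$S_F$, which is a clean substitute for the paper's coordinate argument. Your route is more direct---it bypasses Lemmas~\ref{lem:rojo} and~\ref{lem:PW} and the detour through $\ell_1(B_E)$ entirely---whereas the paper's detour has the side benefit of tying the result to the known description of $FBL(A)$ for finite~$A$ from \cite[Proposition~5.3]{dPW15}. One small point worth making explicit in your write-up: the identity $\hat i(g)=g\circ r$ for all $g\in FBL[F]$ follows once you observe that $g\mapsto g\circ r$ is a bounded lattice homomorphism into $H_0[E]$ (the bound $\|g\circ r\|_{FBL[E]}\le\|g\|_{FBL[F]}$ comes from $B_F\subset B_E$), so it agrees with $\hat i$ by density of the sublattice generated by the $\delta_F(y)$.
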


Lemma~\ref{lem:PWgeneral} will be obtained as a consequence of a particular case
via Lemma~\ref{lem:rojo}. We isolate such particular case in Lemma~\ref{lem:PW} below. It is essentially based on the description 
of the space $FBL(A)$ for a finite set~$A$ given in~\cite[Proposition~5.3]{dPW15}, but we include a detailed proof for the reader's convenience.

\begin{lem}\label{lem:PW}
Let $A$ be a non-empty set and $f:\ell_\infty(A)\to \mathbb{R}$ be a positively homogeneous function such that:
\begin{enumerate}
\item[(i)] $f|_{B_{\ell_\infty(A)}}$ is $w^*$-continuous; 
\item[(ii)] there is a finite non-empty set $A_0 \sub A$ such that $f$ depends on coordinates of~$\{e_a: a\in A_0\}$.
\end{enumerate}
Then $f\in FBL(A)$.
\end{lem}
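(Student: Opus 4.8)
The plan is to reduce to the finite-dimensional situation, where $FBL(A_0)$ is completely described, and then transport the conclusion back to $FBL(A)$ along the maps induced by the inclusion $A_0\sub A$. Throughout I use the identification $FBL(A)=FBL[\ell_1(A)]$, so that the elements of $FBL(A)$ are positively homogeneous functions on $\ell_1(A)^*=\ell_\infty(A)$.

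First I would use hypothesis~(ii) to factor $f$ through the finite set of coordinates. Let $\iota:\ell_1(A_0)\to\ell_1(A)$ be the canonical isometric inclusion, whose adjoint $\iota^*:\ell_\infty(A)\to\ell_\infty(A_0)$ is the coordinate projection $x^*\mapsto x^*|_{A_0}$. Since $f$ depends on coordinates of $\{e_a:a\in A_0\}$, there is a well-defined function $g:\ell_\infty(A_0)\to\mathbb R$ with $f=g\circ\iota^*$, namely $g(t):=f(\tilde t)$ where $\tilde t\in\ell_\infty(A)$ is the extension of $t$ by zero. Clearly $g$ is positively homogeneous, and in fact $g=f\circ e$, where $e:\ell_\infty(A_0)\to\ell_\infty(A)$ is this zero-extension map. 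This $e$ is $w^*$-continuous and carries $B_{\ell_\infty(A_0)}$ into $B_{\ell_\infty(A)}$, so hypothesis~(i) shows that $g|_{B_{\ell_\infty(A_0)}}$ is $w^*$-continuous, hence norm-continuous since $A_0$ is finite and the two topologies coincide on $\ell_\infty(A_0)=\mathbb R^{A_0}$. By positive homogeneity, $g$ is continuous on all of $\ell_\infty(A_0)$.

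Next, since $A_0$ is finite, I would invoke \cite[Proposition~5.3]{dPW15}, according to which $FBL(A_0)$ is exactly the lattice of positively homogeneous continuous functions on $\ell_\infty(A_0)=\ell_1(A_0)^*$; thus $g\in FBL(A_0)$. It then remains to transfer $g$ to $FBL(A)$. Consider the map $R:H_0[\ell_1(A_0)]\to H_0[\ell_1(A)]$ defined by $R(h):=h\circ\iota^*$. It is immediate that $R$ is a lattice homomorphism, since the operations are pointwise, and, using $\iota(B_{\ell_1(A_0)})\sub B_{\ell_1(A)}$ in the supremum defining the norm, one checks that $\norm{R(h)}_{FBL(A)}\leq\norm{h}_{FBL(A_0)}$. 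A direct computation gives $R(\delta_{\ell_1(A_0)}(e_a))=\delta_a$ for every $a\in A_0$, so $R$ carries each generator of $FBL(A_0)$ into $FBL(A)$; being a continuous lattice homomorphism, $R$ therefore maps $FBL(A_0)$ into the closed sublattice of $FBL(A)$ generated by $\{\delta_a:a\in A_0\}$, and in particular $R(FBL(A_0))\sub FBL(A)$. As $R(g)=g\circ\iota^*=f$, we conclude $f\in FBL(A)$.

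The only points requiring genuine care are the continuity of $g$, where hypothesis~(i) and the finiteness of $A_0$ interact, and the verification that the transfer map $R$ lands in $FBL(A)$ rather than merely in $H_0[\ell_1(A)]$ — the latter being exactly what the generator identity $R(\delta_{\ell_1(A_0)}(e_a))=\delta_a$ together with the norm estimate deliver. All the substantive content is packaged in the finite-dimensional description \cite[Proposition~5.3]{dPW15}, so once the reduction is set up the remainder is essentially formal bookkeeping of the two coordinate maps $\iota$ and $\iota^*$.
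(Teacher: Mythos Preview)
Your proof is correct and takes a genuinely different, more functorial route than the paper. The paper works entirely inside $FBL(A)$: it identifies the closed sublattice $U$ generated by $\{\delta_a:a\in A_0\}$, proves a norm estimate (``Claim~1'') showing that restriction to the finite-dimensional sphere $S=\{x^*\in r(\ell_\infty(A)):\|x^*\|=1\}$ is an isomorphism on $U$, and then runs a Stone--Weierstrass argument to see that every continuous function on $S$ arises from~$U$; finally it matches $f$ with the element of~$U$ having the same restriction to~$S$. In effect, the paper reproves \cite[Proposition~5.3]{dPW15} inside $FBL(A)$ rather than citing it. You instead factor $f$ through $\ell_\infty(A_0)$, invoke \cite[Proposition~5.3]{dPW15} as a black box to place the factored function in $FBL(A_0)$, and then push forward along the canonical lattice homomorphism $FBL(A_0)\to FBL(A)$ induced by the inclusion $\ell_1(A_0)\hookrightarrow\ell_1(A)$. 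Your argument is shorter and conceptually cleaner---it is exactly the functoriality of $FBL[\,\cdot\,]$ applied to a complemented subspace---while the paper's version is self-contained and, as they note, included ``for the reader's convenience.'' Both approaches ultimately rest on the same Stone--Weierstrass fact about finitely generated free Banach lattices; you simply outsource that step to the literature.
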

\begin{proof}
Let $r:\ell_\infty(A) \to \ell_\infty(A)$ be the bounded linear operator given by
$$
	r(x^*):=x^*\chi_{A_0}
	\quad 
	\text{for all $x^*\in \ell_\infty(A)$.}
$$
Note that a function $g:\ell_\infty(A)\to \mathbb{R}$ depends on coordinates of~$\{e_a: a\in A_0\}$ if and only if $g=g\circ r$.
Observe also that 
$$
	S:=\{x^*\in r(\ell_\infty(A)): \, \|x^*\|_{\ell_\infty(A)}=1\}
$$ 
is a $w^*$-compact subset of $\ell_\infty(A)$ (because $A_0$ is finite).

{\em Claim~1. If $g:\ell_\infty(A)\to \mathbb{R}$ is a positively homogeneous function which depends on coordinates of~$\{e_a: a\in A_0\}$,
then $\|g\|_{FBL[E]}\leq \sup_{x^*\in S}|g(x^*)| \cdot |A_0|$}.
In order to prove this, we suppose that $C:=\sup_{x^*\in S}|g(x^*)|<\infty$ (otherwise the inequality is trivial).
Take $x_1^*,\dots,x_k^*\in \ell_\infty(A)$ such that $\sup_{a\in A}\sum_{i=1}^k|x_i^*(a)|\leq 1$.
For each $i\in \{1,\dots,k\}$ we choose $a_i\in A_0$ such that
$|x_i^*(a_i)|=\|r(x_i^*)\|_{\ell_\infty(A)}$ and we take $y_i^*\in S$ such that $r(x_i^*)=|x_i^*(a_i)|\, y_i^*$.
Since $g=g\circ r$ and is positively homogeneous, we have
\begin{align*}
\sum_{i=1}^k|g(x_i^*)|&=
\sum_{i=1}^k|g(r(x_i^*))|=
\sum_{i=1}^k |g(y_i^*)| \cdot |x_i^*(a_i)|
\leq C \sum_{i=1}^k |x_i^*(a_i)|
\\ 
& = C \sum_{a\in A_0} \sum_{\substack{i=1\\ a_i=a}}^k |x_i^*(a)|
\leq  C \sum_{a\in A_0} \sum_{i=1}^k |x_i^*(a)| \leq C \cdot |A_0|.
\end{align*} 
This shows that $\|g\|_{FBL[E]}\leq C \cdot |A_0|$ and finishes the proof of {\em Claim~1}.

Recall that each $g\in FBL(A)$ is $w^*$-continuous on bounded subsets of~$\ell_\infty(A)$, see \cite[Lemma~4.10]{ART18}.
Let $j: FBL(A) \to C(S)$ be the restriction map, that is,
$$
	j(g):=g|_S 
	\quad
	\text{for all $g\in FBL(A)$.}
$$
Then $j$ is a lattice homomorphism with $\|j\|\leq 1$.

Let $V$ be the set of all $f\in FBL(A)$ depending on coordinates of~$\{e_a:a\in A_0\}$.
Clearly, $V$ is a sublattice of~$FBL(A)$ and $\delta_{a}\in V$ for all $a\in A_0$. Therefore, 
the sublattice $U$ generated by $\{\delta_{a}:a\in A_0\}$ is contained in~$V$.

{\em Claim~2. The equality $j(U)=C(S)$ holds.}
Indeed, since $j$ is a lattice homomorphism, $j(U)$ is a vector sublattice of~$C(S)$. By the inclusion~$U\sub V$ and {\em Claim~1}, 
$j(U)$ is norm-closed and $j|_U$ is an isomorphism between $U$ and $j(U)$
as Banach spaces. So, by the Stone-Weierstrass theorem, in order to prove that $j(U)=C(S)$
it suffices to check that $j(U)$ separates the points of~$S$ and that the constant function~$1 \in C(S)$ belongs to~$j(U)$.
On the one hand, if $x^*,y^*\in S$ are distinct, then $x^*(a)\neq y^*(a)$ for some $a\in A_0$, that is, 
$j(\delta_{a})(x^*)\neq j(\delta_{a})(y^*)$. Hence $j(U)$ separates the points of~$S$.
On the other hand, let $\varphi:=\bigvee_{a\in A_0}|\delta_{a}|\in U$. Then $j(\varphi)=\bigvee_{a\in A_0}|j(\delta_{a})|$
and so for every $x^*\in S$ we have
$$
	j(\varphi)(x^*)=\max_{a\in A_0}|j(\delta_{a})|(x^*)=\max_{a\in A_0}|x^*(a)|=1,
$$
that is, $j(\varphi)$ is the constant function~$1 \in C(S)$. The proof of {\em Claim~2} is finished.

Finally, let $f:\ell_\infty(A)\to \mathbb{R}$ be as in the statement of the lemma. By {\em Claim~2}, there is $g\in U$ such that $f|_S=g|_S$.
Since $f$ and $g$ are positively homogeneous and depend on coordinates of~$\{e_a: a\in A_0\}$, we have $f=g$, 
as it can be easily checked. Thus $f\in FBL(A)$. 
\end{proof}

\begin{proof}[Proof of Lemma~\ref{lem:PWgeneral}]
Write $B=\{x_1,\dots,x_n\}$. We can assume without loss of generality that $B \sub B_E$ and that
$B$ is linearly independent. The latter condition ensures that we can find $x_1^*,\dots,x_n^*\in E^*$
such that $x_i^*(x_j)=1$ if $i=j$ and $x_i^*(x_j)=0$ if $i\neq j$ for every $i,j \in \{1,\dots,n\}$. Define $g:\ell_\infty(B_E) \to \mathbb R$ by 
$$
	g(\xi):=f\big(\xi(x_1)x_1^*+\dots+\xi(x_n)x_n^*\big)
	\quad
	\mbox{for all }\xi\in \ell_\infty(B_E). 
$$
Clearly, g is positively homogeneous, depends on coordinates of $\{e_{x_1},\dots,e_{x_n}\}$ and 
the restriction $g|_{B_{\ell_\infty(B_E)}}$ is $w^*$-continuous (because $f$ is norm-continuous on bounded sets), so $g\in FBL(B_E)$ by Lemma~\ref{lem:PW}. 
Since $f$ depends on coordinates of $\{x_1,\dots,x_n\}$, we have 
$$
	g(x^*|_{B_E})=f\big(x^*(x_1)x_1^*+\dots+x^*(x_n)x_n^*\big)=f(x^*)
	\quad
	\mbox{for all }x^*\in E^*,
$$ 
and so we can apply Lemma~\ref{lem:rojo} to conclude that $f\in FBL[E]$. 
\end{proof}

\begin{lem}\label{lem:1unconditionalbasis}   
Let $E$ be a Banach space with a $1$-unconditional basis $(e_n)_{n\in \mathbb N}$ and let $(e_n^*)_{n\in \mathbb N}$ be the sequence in~$E^*$ of 
biorthogonal functionals associated with~$(e_n)_{n\in \mathbb N}$. Then
$$
	\left\|\sum_{i=1}^l |x_{i}^\ast(e_{m_i})| \, e_{m_i}^\ast\right\|_{E^*} \leq \sup_{x \in B_{E}}\sum_{i = 1}^l|x_i^*(x)|
$$
for all $l\in \nat$, all $x_1^*, x_2^*, \ldots, x_l^*\in B_{E^\ast}$ and all $m_1, m_2, \ldots, m_l \in \nat$.
\end{lem}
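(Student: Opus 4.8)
The plan is to show that the functional $z^*:=\sum_{i=1}^l \abs{x_i^*(e_{m_i})}\,e_{m_i}^*\in E^*$ satisfies $z^*(x)\le M$ for every $x\in B_E$, where $M:=\sup_{y\in B_E}\sum_{i=1}^l\abs{x_i^*(y)}$ denotes the right-hand side. Since $B_E$ is symmetric, $\norm{z^*}_{E^*}=\sup_{x\in B_E}z^*(x)$, so this will give the desired inequality. Writing $x_n:=e_n^*(x)$ and collecting possibly repeated indices, one has $z^*(x)=\sum_{i=1}^l\abs{x_i^*(e_{m_i})}\,x_{m_i}$, so the task reduces to bounding this ``diagonal'' sum by $M$.

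The difficulty is that one cannot in general exhibit a single $y\in B_E$ with $\sum_i\abs{x_i^*(y)}\ge z^*(x)$: any natural candidate also picks up unwanted cross terms involving $x_i^*(e_n)$ with $n\ne m_i$. The idea I would use to annihilate these cross terms is an averaging argument over sign flips. Let $N:=\{m_1,\dots,m_l\}$, a finite set, and for each sign vector $\theta=(\theta_n)_{n\in N}\in\{-1,1\}^N$ set $y_\theta:=\sum_{n\in N}\theta_n x_n e_n$. Since $\abs{e_n^*(y_\theta)}\le\abs{x_n}$ for every $n$ (with equality for $n\in N$ and the value $0$ for $n\notin N$), the $1$-unconditionality of the basis gives $\norm{y_\theta}\le\norm{x}\le1$; thus $y_\theta\in B_E$ and hence $\sum_{i=1}^l\abs{x_i^*(y_\theta)}\le M$ for \emph{every} $\theta$.

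I would then attach to each index $i$ the sign $s_i(\theta):=\operatorname{sgn}(x_i^*(e_{m_i}))\,\theta_{m_i}$, so that $\sum_{i=1}^l s_i(\theta)\,x_i^*(y_\theta)\le\sum_{i=1}^l\abs{x_i^*(y_\theta)}\le M$ for every $\theta$. Averaging uniformly over the $2^{\abs{N}}$ sign vectors, the mean of $\theta_{m_i}\theta_n$ equals $1$ if $n=m_i$ and $0$ otherwise, so every cross term cancels and only the diagonal survives:
$$
2^{-\abs{N}}\sum_{\theta}\sum_{i=1}^l s_i(\theta)\,x_i^*(y_\theta)
=\sum_{i=1}^l \operatorname{sgn}(x_i^*(e_{m_i}))\,x_{m_i}\,x_i^*(e_{m_i})
=\sum_{i=1}^l\abs{x_i^*(e_{m_i})}\,x_{m_i}=z^*(x).
$$
Since each of the averaged quantities is at most $M$, their average is at most $M$, whence $z^*(x)\le M$. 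Taking the supremum over $x\in B_E$ then yields $\norm{z^*}_{E^*}\le M$, which is the assertion.

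The only genuinely nontrivial point — and thus the main obstacle — is the recognition that no single test vector can work and that one must instead average over sign flips of the coordinates indexed by $N$. The role of $1$-unconditionality is exactly to guarantee that each sign-flipped, support-restricted vector $y_\theta$ stays in $B_E$; this is what permits the uniform bound $\sum_i\abs{x_i^*(y_\theta)}\le M$ to be invoked for every $\theta$ prior to averaging, so that the clean cancellation of cross terms can be exploited.
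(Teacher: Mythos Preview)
Your proof is correct and follows essentially the same Rademacher-averaging approach as the paper: both arguments plug sign-flipped, support-restricted copies of the test vector into $\sum_i|x_i^*(\cdot)|$ (using $1$-unconditionality to stay in $B_E$) and then average over signs so that the cross terms vanish and only the diagonal $\sum_i|x_i^*(e_{m_i})|\,e_{m_i}^*(x)$ survives. The only cosmetic difference is that the paper averages $\sum_i|x_i^*(z^\varepsilon)|$ directly and splits according to $\varepsilon_{m_i}=\pm1$, whereas you insert the signs $s_i(\theta)$ first; your version is a touch cleaner but the idea is identical.
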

\begin{proof} Consider $m:=\max\{m_i: i=1,\dots,l \}$. Fix $z\in B_E$. We will
show that
$$
	\left|\sum_{i=1}^l |x_{i}^\ast(e_{m_i})| \, e_{m_i}^\ast(z)\right| \leq \sup_{x \in B_{E}}\sum_{i = 1}^l|x_i^*(x)|=:\alpha.
$$
For every choice of signs $\varepsilon=(\varepsilon_j)_{j=1}^m \in \{-1,1\}^m$, define $z^\varepsilon := \sum_{j=1}^m \varepsilon_j e_j^*(z) e_j$, 
so that $z^\varepsilon\in B_E$ (since the basis is $1$-unconditional) and therefore
$$
	\sum_{i=1}^l \left|\sum_{j=1}^m \varepsilon_j e_j^*(z) x_i^*(e_j) \right|
	=\sum_{i = 1}^l |x_i^\ast(z^\varepsilon)| 
	\leq \alpha.
$$
So, in order to finish the proof it suffices to check that
\begin{equation}\label{eqn:1unconditional}
	\left|\sum_{i=1}^l |x_{i}^\ast(e_{m_i})| \, e_{m_i}^\ast(z)\right| \leq 
	\max \set{\sum_{i=1}^l \left|\sum_{j=1}^m \varepsilon_j e_j^*(z) x_i^*(e_j) \right| : \, \varepsilon \in \set{-1,1}^m}.
\end{equation}
To this end, observe first that
\begin{equation}\label{eqn:triangle} 
	\max \set{\sum_{i=1}^l \left|\sum_{j=1}^m \varepsilon_j e_j^*(z) x_i^*(e_j) \right| : \, \varepsilon \in \set{-1,1}^m} 
 	\geq  
 	\frac{1}{2^m}\sum_{\varepsilon \in \set{-1,1}^m}
 	\sum_{i=1}^l \left|\sum_{j=1}^m \varepsilon_j e_j^*(z) x_i^*(e_j) \right| 
\end{equation}
\begin{eqnarray*}
 	& = &\frac{1}{2^m} \sum_{i=1}^l \sum_{\substack{\varepsilon \in \set{-1,1}^m \\ \varepsilon_{m_i}=1}} \left|\sum_{j=1}^m \varepsilon_j e_j^*(z) x_i^*(e_j) \right|
 	+\frac{1}{2^m} \sum_{i=1}^l \sum_{\substack{\varepsilon \in \set{-1,1}^m \\ \varepsilon_{m_i}=-1}} \left|\sum_{j=1}^m \varepsilon_j e_j^*(z) x_i^*(e_j) \right|
 	\\
 	&\geq &\frac{1}{2^m} \sum_{i=1}^l   
 	\left| \sum_{\substack{\varepsilon \in \set{-1,1}^m \\ \varepsilon_{m_i}=1}} \sum_{j=1}^m \varepsilon_j e_j^*(z) x_i^*(e_j) \right|
 	+ \frac{1}{2^m} \sum_{i=1}^l  
 	\left| \sum_{\substack{\varepsilon \in \set{-1,1}^m \\ \varepsilon_{m_i}=-1}} \sum_{j=1}^m \varepsilon_j e_j^*(z) x_i^*(e_j) \right|.
\end{eqnarray*}
Now, for each $i=1\dots,l$ and each $\sigma\in \{-1,1\}$ we have
\begin{multline*}
	 \left| \sum_{\substack{\varepsilon \in \set{-1,1}^m \\ \varepsilon_{m_i}=\sigma}} \sum_{j=1}^m \varepsilon_j e_j^*(z) x_i^*(e_j) \right|
	 \\=
	 \left| \sigma 2^{m-1} e_{m_i}^*(z) x_{i}^*(e_{m_i}) + \sum_{\substack{\varepsilon \in \set{-1,1}^m \\ \varepsilon_{m_i}=\sigma}} 
	 \sum_{\substack{j=1 \\ j \neq m_i}}^m \varepsilon_j e_j^*(z) x_i^*(e_j) \right|=
	 2^{m-1} | e_{m_i}^*(z) x_{i}^*(e_{m_i}) |.
\end{multline*}
Thus, from~\eqref{eqn:triangle} it follows that
$$
	\max \set{\sum_{i=1}^l \left|\sum_{j=1}^m \varepsilon_j e_j^*(z) x_i^*(e_j) \right| : \, \varepsilon \in \set{-1,1}^m}
	\geq \sum_{i=1}^l | e_{m_i}^*(z) x_{i}^*(e_{m_i}) |
	\geq \left|\sum_{i=1}^l |x_{i}^\ast(e_{m_i})| \, e_{m_i}^\ast(z)\right|,
$$
and so \eqref{eqn:1unconditional} holds. The proof is finished.
\end{proof}

We are now ready to prove Theorem~\ref{t:unconditional}.

\begin{proof}[Proof of Theorem~\ref{t:unconditional}]
We consider $X$ equipped with the coordinatewise order induced by a fixed $1$-unconditional basis $(e_n)_{n \in \mathbb{N}}$ of~$X$.
Clearly, we can assume without loss of generality that $\|e_n\|_X=1$ for all $n\in \nat$.
We will show that there is a disjoint sequence $(f_n)_{n \in \mathbb{N}} \in FBL[X]_+$ such that
$\beta_X(f_n)=e_n$ for all $n\in\mathbb N$ and
$$
	\norm{\sum_{n=1}^N a_n f_n}_{FBL[X]} \leq \norm{\sum_{n=1}^N a_n e_n}_{X}
$$ 
for every $N\in\nat$ and all $a_1,\dots,a_N\in\mathbb{R}$.
Once we have this, it is easy to check that 
$$
	T:X\rightarrow FBL[X],
	\quad T(x):=\sum_{n\in \mathbb{N}} e_n^*(x) f_n, 
$$
is a well-defined lattice homomorphism such that $\beta_X \circ T= id_X$ and $\|T\|=1$, where 
$(e_n^*)_{n\in \mathbb{N}}$ is the sequence in~$X^*$ of biorthogonal functionals associated with~$(e_n)_{n\in \mathbb{N}}$.

We divide the proof into several steps.

\smallskip
{\em Step~0. Identification of~$\beta_X$.} For each $n\in \nat$, both $\varphi_{e_n^*}$ and $e_n^*\circ \beta_X$ belong to $Hom(FBL[X],\mathbb R)$
and satisfy $\varphi_{e_n^*}\circ \delta_X = e_n^* = e_n^*\circ \beta_X \circ \delta_X$, hence $\varphi_{e_n^*}=e_n^*\circ \beta_X$.
It follows that for every $f\in FBL[X]$ we have
\begin{equation}\label{eqn:beta}
	\beta_X(f)=\sum_{n\in \nat}f(e_n^*)e_n,
\end{equation}
the series being unconditionally convergent.

\smallskip
{\em Step~1. Construction of the $f_n$'s.} 
Let $(M_n)_{n \in \mathbb{N}}$ and $(N_n)_{n \in \mathbb{N}}$ be two strictly increasing sequences 
such that $M_n < N_n$ for all $n\in \mathbb{N}$ and 
$$
	\sum_{n\in \mathbb{N}}\frac{1}{M_n}<\infty.
$$
So, each subseries of $\sum_{n\in \nat}\frac{1}{M_n}e_n$ is absolutely convergent in~$X$.  
For each $m\in \mathbb{N}$, let $g_m:[0,\infty)\to [0,1]$ be a continuous decreasing function such that $g_m(t)=0$ if $t\geq N_m$, while $g_m(t)=1$ if $t\leq M_m$. 
Given $r \in \mathbb{R}$, we write $r^+ = r\vee 0$ to denote its positive part.
For each $n \in \mathbb{N}$, we define $f_n: X^* \to [0,\infty)$ by
$$
	f_n(x^*) := 
	\begin{cases}
	\big(|x^*(e_n)|-N_n\max\set{|x^*(e_m)| : m<n}\big)^+ \cdot \prod_{m > n}g_{m}\big(|x^*(e_m)|/|x^*(e_n)|\big) & \text{if $x^*(e_n)\neq 0$,} \\
	0 & \text{if $x^*(e_n)= 0$.}
	\end{cases}
$$
Here $\prod_{m > n}g_{m}\big(|x^*(e_m)|/|x^*(e_n)|\big)$ is the limit of the sequence 
$(\prod_{m = n+1}^{n+k} g_{m}\big(|x^*(e_m)|/|x^*(e_n)|\big)_{k\in \nat}$, which is decreasing and contained in~$[0,1]$
(because each $g_m$ takes values in $[0,1]$). Clearly, $f_n$ is positively homogeneous.

\smallskip
{\em Step~2. $(f_n)_{n\in \nat}$ is disjoint.} Indeed, fix $n<l$ in~$\nat$ and pick $x^* \in X^*$ such that $f_l(x^*) \neq 0$. 
If $x^*(e_n)=0$, then $f_n(x^*)=0$ by the very definition. Suppose $x^*(e_n)\neq 0$.
Since $f_l(x^*) \neq 0$, we have $|x^*(e_l)| > N_l \max \set{|x^*(e_m)| : m<l}$ and so $|x^*(e_l)| > N_l |x^*(e_n)|$. 
Hence $g_{l}(|x^*(e_l)|/|x^*(e_n)|)=0$ and so $f_n(x^*)=0$. This shows that $f_n \wedge f_l = 0$.

\smallskip
{\em Step~3. $f_n \in FBL[X]$ for every $n \in \mathbb{N}$.} To prove this, fix $n,k\in \nat$ and define 
$h_{n,k}: X^* \to \mathbb{R}$ by 
$$
	h_{n,k}(x^*) := 
	\begin{cases}
	\big(|x^*(e_n)|-N_n\max\set{|x^*(e_m)| : m<n}\big)^+ \cdot \prod_{m=n+1}^{n+k} g_{m}\big(|x^*(e_m)|/|x^*(e_n)|\big) & \text{if $x^*(e_n)\neq 0$,} \\
	0 & \text{if $x^*(e_n)= 0$.}
	\end{cases}
$$

We claim that $h_{n,k} \in FBL[X]$. Indeed, observe that $h_{n,k}$ is positively homogeneous
and depends on coordinates of the finite set $\{e_1,\dots,e_{n+k}\}\sub X$. Further, it is easy to check that
$h_{n,k}$ is $w^*$-continuous: bear in mind that the map $x^*\mapsto |x^*(e_m)|$ is $w^*$-continuous for every $m\in \nat$, 
that all the $g_m$'s are continuous and that the inequality $|h_{n,k}(x^*)|\leq |x^*(e_n)|$ holds for every $x^*\in X^*$.
From Lemma~\ref{lem:PWgeneral} it follows that $h_{n,k}\in FBL[X]$, as claimed.

We next prove that  
\begin{equation}\label{eqn:freenorm}
	\|h_{n,k}-f_n\|_{FBL[X]} \leq \sum_{j>n+k} \frac{1}{M_{j}}.
\end{equation}
Indeed, take $x_1^*, \ldots, x_l^* \in X^\ast$ such that $\sup_{x \in B_{X}}\sum_{i = 1}^l|x_i^*(x)| \leq 1$.
We seek to estimate 
$$
	\sum_{i=1}^l |h_{n,k}(x_i^*)-f_n(x_i^*)|,
$$ 
so we can assume that $h_{n,k}(x_i^*) \neq f_n(x_i^*)$ for all $i=1,\dots,l$.
This means that for each $i = 1,\ldots, l$ we have $|x_{i}^*(e_n)| \neq 0$ and 
there is $m_i > n+k$ such that $g_{m_i}(|x_{i}^\ast(e_{m_i})|/|x_{i}^\ast(e_n)|) \neq 1$, 
that is, $|x_{i}^*(e_{m_i})|>M_{m_i} |x_{i}^*(e_n)|$. So, bearing in mind that $h_{n,k} \geq f_n$, we have
$$
	|h_{n,k}(x_i^*)-f_n(x_i^*)| = h_{n,k}(x_i^*)-f_n(x_i^*)  \leq h_{n,k}(x_i^*) \leq |x_i^*(e_n)| < \frac{1}{M_{m_i}} |x_{i}^*(e_{m_i})|
$$
for every $i = 1,\ldots, l$. It follows that
\begin{eqnarray*}
	\sum_{i=1}^l |h_{n,k}(x_i^*)-f_n(x_i^*)| & \leq & \sum_{i=1}^l \frac{1}{M_{m_i}}|x_{i}^*(e_{m_i})| 
	= \left(\sum_{i=1}^l |x_{i}^\ast(e_{m_i})| e_{m_i}^\ast\right)\left(\sum_{j>n+k}\frac{1}{M_{j}}e_{j}\right) \\
	& \leq & \left\|\sum_{i=1}^l |x_{i}^\ast(e_{m_i})| e_{m_i}^\ast\right\|_{X^*}\cdot\left \|\sum_{j>n+k}^\infty\frac{1}{M_{j}}e_j\right\|_X
	\leq \sum_{j>n+k}^\infty\frac{1}{M_{j}}, 
\end{eqnarray*}
where the last inequality is a consequence of Lemma~\ref{lem:1unconditionalbasis}.
This works for any finite collection of functionals $x_1^*, \ldots, x_l^* \in X^\ast$ 
satisfying $\sup_{x \in B_{X}}\sum_{i = 1}^l|x_i^*(x)| \leq 1$, hence inequality~\eqref{eqn:freenorm} holds.

Finally, since each $h_{n,k}$ belongs to~$FBL[X]$ and $\lim_{k\to \infty}\|h_{n,k}-f_n\|_{FBL[X]}=0$ 
(by~\eqref{eqn:freenorm} and the fact that $\sum_{j\in\nat}\frac{1}{M_j}<\infty$), we conclude
that $f_n\in FBL[X]$, which finishes the proof of {\em Step~3}.

\smallskip
{\em Step~4. $\beta_X(f_n)=e_n$ for every $n\in \nat$.} Indeed, this follows from~\eqref{eqn:beta}, bearing in mind that, for each $j\in \nat$, 
we have $f_n(e_j^*)=1$ if $j=n$ and $f_n(e_j^*)=0$ if $j\neq n$ (by the very definition of~$f_n$). 

\smallskip
{\em Step~5. The inequality}
\begin{equation}\label{eqn:normspan}
	\norm{\sum_{n=1}^N a_n f_n}_{FBL[X]} \leq \norm{\sum_{n=1}^N a_n e_n}_{X}
\end{equation}
{\em holds for every $N\in\nat$ and all $a_1,\dots,a_N\in\mathbb{R}$.}
Indeed, since the sequence $(f_n)_{n\in \nat}$ is disjoint and $(e_n)_{n\in \nat}$ is $1$-unconditional, 
the norms appearing in~\eqref{eqn:normspan} do not change by switching the signs of the coefficients, and we can 
assume that $a_n\geq 0$ for all $n=1,\dots,N$. Set $f:=\sum_{n=1}^N a_nf_n$. 
Fix $x_1^*, x_2^*, \ldots, x_k^* \in  X^\ast$ with $\sup_{x \in B_{X}} \sum_{j=1}^k |x_j^*(x)| \leq 1$, and
suppose without loss of generality that $f(x_j^*)\neq 0$ for all $j=1,\dots,k$.
Since the $f_n$'s are disjoint, for each $j=1,\ldots,k$ there is a unique $n_j \in \{1,\ldots,N\}$ such that $f_{n_j}(x_j^*) \neq 0$, hence
$$ 
	\sum_{j=1}^k |f (x_j^*)| = \sum_{j=1}^k f (x_j^*) =
	\sum_{j=1}^k a_{n_j}f_{n_j}(x_j^*).
$$
Since $f_{n_j}(x_j^*) \leq | x_{j}^*(e_{n_j})| $ for every $j=1,\ldots, k$, we get
\begin{align*}
	\sum_{j=1}^k |f (x_j^*)| & \leq  \sum_{j=1}^k  a_{n_j} |x_{j}^*(e_{n_j})| = \left(\sum_{j=1}^k |x_{j}^\ast(e_{n_j})|e_{n_j}^\ast\right)\left(\sum_{n=1}^N a_n e_n\right)\\
	& \leq \left\|\sum_{j=1}^k |x_{j}^\ast(e_{n_j})|e_{n_j}^\ast\right\|_{X^*}\cdot \left\|\sum_{n=1}^N a_n e_n\right\|_X 	\leq \left\|\sum_{n=1}^N a_n e_n\right\|_X,
\end{align*}
where the last inequality follows from Lemma~\ref{lem:1unconditionalbasis}.
This shows that~\eqref{eqn:normspan} holds.

The proof of Theorem~\ref{t:unconditional} is complete.
\end{proof}

\section*{Acknowledgements}
We wish to thank Valentin Ferenczi for suggesting this line of research.

Research partially supported by {\em Fundaci\'{o}n S\'{e}neca} [20797/PI/18] and {\em Agencia Estatal de Investigaci\'{o}n} [MTM2017-86182-P to 
A. Avil\'es, G. Mart\'inez-Cervantes, and J. Rodr\'iguez; MTM2016-76808-P, MTM2016-75196-P to P. Tradacete; all grants being cofunded by ERDF, EU].
The research of G. Mart\'inez-Cervantes was co-financed by the European Social Fund and the Youth European Initiative under {\em Fundaci\'{o}n S\'{e}neca}
[21319/PDGI/19]. P.~Tradacete gratefully acknowledges support by {\em Ministerio de Econom\'{\i}a, Industria y Competitividad} through 
{\em ``Severo Ochoa Programme for Centres of Excellence in R\&D''} [SEV-2015-0554] and {\em Consejo Superior de Investigaciones Cient\'ificas} 
through  {\em ``Ayuda extraordinaria a Centros de Excelencia Severo Ochoa''} [20205CEX001].

\end{document}